\shorttitle{A conditional Berry-Esseen inequality} 
\newcommand{\abs}[1]{\left\lvert #1 \right\rvert}
\newcommand{\norme}[1]{\left\lVert #1 \right\rVert}
\newcommand{\defeq}{\mathrel{\mathop:}=}
\newcommand{\eqdef}{\mathrel{=}:}
\newcommand{\ensnombre}[1]{\mathbb{#1}}%
\newcommand{\Z}{\ensnombre{Z}}
\newcommand{\R}{\ensnombre{R}}
\newcommand{\intervalle}[4]{\left#1#2\mathclose{}\mathpunct{},#3\right#4}%
\newcommand{\intervalleff}[2]{\intervalle{[}{#1}{#2}{]}}%
\newcommand{\intervalleoo}[2]{\intervalle{]}{#1}{#2}{[}}%
\newcommand{\intervallentff}[2]{\intervalle{\llbracket}{#1}{#2}{\rrbracket}}%
\newcommand{\indic}{\mathbbm{1}}
\newcommand{\Prob}{\mathbb{P}}
\newcommand{\Loi}{\mathcal{L}}
\newcommand{\Espe}{\mathbb{E}}
\DeclareMathOperator{\Var}{Var}
\DeclareMathOperator{\Cov}{Cov}
\newcommand{\intervalle}[4]{\left#1#2\mathclose{}\mathpunct{},#3\right#4}%
\newcommand{\intervalleff}[2]{\intervalle{[}{#1}{#2}{]}}%
\newcommand{\intervalleoo}[2]{\intervalle{]}{#1}{#2}{[}}%
\newcommand{\intervallentff}[2]{\intervalle{\llbracket}{#1}{#2}{\rrbracket}}%
\newcommand{\intervalle}[4]{\left#1#2,#3\right#4}%
\newcommand{\intervalleff}[2]{\intervalle{[}{#1}{#2}{]}}%
\newcommand{\intervalleoo}[2]{\intervalle{(}{#1}{#2}{)}}%
\newcommand{\intervallentff}[2]{\intervalle{\llbracket}{#1}{#2}{\rrbracket}}%
\newcommand*{\rva}[3][]{%
	\@ifmtarg{#1}{
		#2_{#3}
	}{%
		#2_{#3,#1}
	}%
}
\newcommand*{\ssdd}[1]{\sigma_{#1}}
\newcommand*{\tm}[1]{\rho_{#1}}
\newcommand*{\bnxm}{\ssdd{\rva{X}{n}} N_n^{1/2}}
\newcommand*{\bnym}{\ssdd{\rva{Y}{n}} N_n^{1/2}}
\newcommand*{\bnx}{\ssdd{\rva{X}{n}}^{-1} N_n^{-1/2}}
\newcommand*{\bny}{\ssdd{\rva{Y}{n}}^{-1} N_n^{-1/2}}
\newcommand*{\ctll}{c_1}
\newcommand*{\cvarX}{c_2}
\newcommand*{\cvarXtilde}{\tilde{c}_2}
\newcommand*{\crhoX}{c_3}
\newcommand*{\cvarY}{c_4}
\newcommand*{\cvarYtilde}{\tilde{c}_4}
\newcommand*{\crhoY}{c_5}
\newcommand*{\ccorr}{c_6}
\newcommand*{\cfcX}{c_7}
\newcommand*{\cexpU}{d_1}
\newcommand*{\cvarU}{d_2}
\newcommand*{\cvarUprime}{d_2'}
\newcommand*{\cvarUseconde}{d_2''}
\newcommand*{\cvarUtierce}{d_2'''}
\newcommand*{\clemdeux}{C_4}
\newcommand*{\cn}{\gamma_n}
\renewcommand*{\th}{th}
\newcommand*{\leqp}{\leqslant}
\newcommand*{\geqp}{\geqslant}
\def\input@path{{./images/}}
\begin{document}

\title{A conditional Berry-Esseen inequality} 

\authorone[ENAC and Institut de
Math\'ematiques de Toulouse, UMR5219 -- Universit\'e de Toulouse, CNRS]{Thierry Klein} 
\addressone{ENAC, 7 avenue Edouard Belin, F-31400 Toulouse, France} 

\authortwo[Institut de
Math\'ematiques de Toulouse, UMR5219 -- Universit\'e de Toulouse, CNRS]{Agn\`es Lagnoux} 
\addresstwo{UT2, F-31058 Toulouse, France} 

\authorthree[Institut de
Math\'ematiques de Toulouse, UMR5219 -- Universit\'e de Toulouse, CNRS]{Pierre Petit} 
\addressthree{UPS IMT, F-31062 Toulouse Cedex 9, France } 

\begin{abstract}
\noindent As an extension of a central limit theorem established by Svante Janson, we prove a Berry-Esseen inequality for a sum of independent and identically distributed random variables conditioned by a sum of independent and identically distributed integer-valued random variables.
\end{abstract}

\keywords{Berry-Esseen inequality; conditional distribution; combinatorial problems; occupancy; hashing with linear probing; random forests; branching processes; Bose-Einstein statistics.} 

\ams{60F05; 62E20}{60C05} 

\section{Introduction}

As pointed out by Svante Janson in his seminal work \cite{Janson01}, in many random combinatorial problems, the interesting statistic is the sum of independent and identically distributed (i.i.d.)\ random variables conditioned on some exogenous integer-valued random variable. In general, the exogenous random variable is itself a sum of integer-valued random variables. Here, we are interested in the law of $N^{-1} (Y_1 + \dots + Y_N)$ conditioned on a specific value of $X_1 + \dots + X_N$ that is to say in the conditional distribution
\[
\Loi_N \defeq \Loi(N^{-1} (Y_1 + \dots + Y_N) \ |\ X_1 + \dots + X_N = m) ,
\]
where $m$ and $N$ are integers and the $(X_i, Y_i)$ for $1 \leqp i \leqp N$ are i.i.d.\ copies of a vector $(X, Y)$ of random variables with $X$ integer-valued.

In \cite{Janson01}, Janson proves a general central limit theorem (with convergence of all moments) for this kind of conditional distribution under some reasonable assumptions and gives several applications in classical combinatorial problems: occupancy in urns, hashing with linear probing, random forests, branching processes, etc. Following this work, one  natural question arises: is it possible to obtain a general Berry-Esseen inequality for these models?

The first Berry-Esseen inequality for a conditional model is given by Malcolm P.\ Quine and John Robinson in \cite{QR82}. They study the particular case of the occupancy problem, i.e., the case when the random variable $X$ is Poisson distributed and $Y = \indic_{\{ X = 0 \} }$. Up to our knowledge, it is the only result in that direction for this kind of conditional distribution.

Our paper is organized as follows. In Section \ref{sec:BE}, we present the model and we state our main results (Theorems \ref{th:BE_cond_strong} and \ref{th:BE_cond_strong_U}). In Section \ref{sec:example}, we describe classical examples. The last section is dedicated to the proofs.

\section{Conditional Berry-Esseen inequality}\label{sec:BE}

For all $n \geqslant 1$, we consider a vector of random variables $(\rva{X}{n},\rva{Y}{n})$ such that $\rva{X}{n}$ is integer-valued and $\rva{Y}{n}$ real-valued.
Let $N_n$ be a natural number such that $N_n \to \infty$ as $n$ goes to infinity.
Let $(\rva[i]{X}{n},\rva[i]{Y}{n})_{ 1\leqp i\leqp N_n}$ be an i.i.d.\ sample distributed as $(\rva{X}{n},\rva{Y}{n})$ and define 
\[
\rva[k]{S}{n} \defeq \sum_{i=1}^{k} \rva[i]{X}{n} \quad \text{and} \quad  \rva[k]{T}{n}\defeq\sum_{i=1}^{k} \rva[i]{Y}{n},
\]
for $k\in \intervallentff{1}{N_n}$. To lighten notation, define $S_n \defeq \rva[N_n]{S}{n}$ and $T_n \defeq \rva[N_n]{T}{n}$. Let $m_n \in \Z$ be such that $\Prob(S_n = m_n) > 0$. The purpose of the paper is to prove a Berry-Esseen inequality for the conditional distributions 
\[
\mathcal{L}(U_n) \defeq \mathcal{L}( T_n | S_n = m_n) .
\]

\begin{assumption}\label{hyp}

\renewcommand{\theenumi}{(A\arabic{enumi})}
\renewcommand{\labelenumi}{\theenumi}
Suppose that there exist positive constants $\ctll$, $\cvarXtilde$, $\cvarX$, $\crhoX$, $\cvarYtilde$, $\cvarY$, $\crhoY$, $\ccorr$, $\cfcX$, and $\eta_0$, such that:
\begin{enumerate}
\setcounter{enumi}{\theassumption}
\item \label{ass:tll}
$ \cn\defeq 2\pi \ssdd{\rva{X}{n}} N_n^{1/2}\Prob(S_n = m_n) \geqslant \ctll$;
\item \label{ass:var_X} $\cvarXtilde \leqslant \ssdd{\rva{X}{n}} \defeq \Var\left(\rva{X}{n}\right)^{1/2} \leqslant \cvarX$;
\item \label{ass:rho_X} $\tm{\rva{X}{n}} \defeq \Espe\bigl[ \abs{\rva{X}{n}-\Espe[\rva{X}{n}]}^3 \bigr] \leqslant \crhoX \ssdd{\rva{X}{n}}^3$;

\item \label{ass:var_Y} $\cvarYtilde \leqslant \ssdd{\rva{Y}{n}} \defeq \Var\left(\rva{Y}{n}\right)^{1/2} \leqslant \cvarY$;
\item \label{ass:rho_Y} $\tm{\rva{Y}{n}} \defeq \Espe\bigl[\abs{\rva{Y}{n}-\Espe[\rva{Y}{n}]}^3\bigr] \leqslant \crhoY \ssdd{\rva{Y}{n}}^3$;

\item \label{ass:corr} the correlations $r_n \defeq \Cov\left(\rva{X}{n},\rva{Y}{n}\right) \ssdd{\rva{X}{n}}^{-1} \ssdd{\rva{Y}{n}}^{-1}$ satisfy $|r_n| \leqslant \ccorr < 1$;
\item \label{ass:fc_XY} for $\rva{Y'}{n} \defeq \rva{Y}{n} -\Espe[\rva{Y}{n}] - \Cov(\rva{X}{n},\rva{Y}{n}) \ssdd{\rva{X}{n}}^{-2} (\rva{X}{n}-\Espe[\rva{X}{n}] )$, for all $s \in \intervalleff{-\pi}{\pi}$, and for all $t \in \intervalleff{-\eta_0}{\eta_0}$,
\[
\abs{\Espe\bigl[ e^{i(s\rva{X}{n} + t\rva{Y'}{n})} \bigr]} \leqslant 1 - \cfcX \bigl( \ssdd{\rva{X}{n}}^2 s^2 + \ssdd{\rva{Y'}{n}}^2 t^2 \bigr) .
\]
\end{enumerate}
\renewcommand{\theenumi}{\alph{enumi}}
\renewcommand{\labelenumi}{\theenumi.}
\end{assumption}

Obviously, Assumption \ref{hyp} is very close to the set of assumptions of the central limit theorem established in \cite[Theorem 2.3]{Janson01}. 
In particular, \ref{ass:tll} is a consequence of
$m_n = N_n\Espe[\rva{X}{n}]+O\bigl(\ssdd{\rva{X}{n}} N_n^{1/2}\bigr)$,
\ref{ass:rho_X}, and \ref{ass:fc_XY} (see the proof of Theorem 2.3 in \cite{Janson01}).
By \cite[Lemma 4.1.]{Janson01}, $\ssdd{\rva{X}{n}}^2 \leqslant 4 \Espe[\abs{X - \Espe[X]}^3]$ , so $\cvarXtilde$ can be chosen as $1/(4\crhoX)$. \ref{ass:corr} is not very restricting and holds in the examples provided in Section \ref{sec:example}.
Following \cite{Janson01}, we introduce $\rva{Y'}{n}$ in \ref{ass:fc_XY} in order to work with a centered variable uncorrelated with $\rva{X}{n}$.
If $(X, Y')$ is a vector of centered and uncorrelated random variables, then
\begin{align*}
\abs{\Espe\bigl[ e^{i(sX + tY')} \bigr]} & = 1 - \frac{1}{2} \bigl( \ssdd{X}^2 s^2 + \ssdd{Y'}^2 t^2 \bigr) + o(s^2 + t^2) ,
\end{align*}
so \ref{ass:fc_XY} is reasonable if the vectors $(\rva{X}{n}, \rva{Y'}{n})$ are identically distributed.


\begin{proposition}\label{cor:cv_distrib}
Assume that
\[
m_n = N_n\Espe[\rva{X}{n}]+O\bigl(\ssdd{\rva{X}{n}} N_n^{1/2}\bigr),
\]
that $(\rva{X}{n}, \rva{Y}{n})$ converges in distribution to $(X, Y)$ as $n \to \infty$, and that, for every fixed $r > 0$,
\[
\limsup_{n \to \infty} \Espe\left[|\rva{X}{n}|^r\right] < \infty \quad \text{and} \quad \limsup_{n \to \infty} \Espe\left[|\rva{Y}{n}|^r\right ]< \infty .
\]
Suppose further that the distribution of $X$ has span 1 and that $Y$ is not almost surely equal to an affine function $c+dX$ of $X$. Then, Assumption \ref{hyp} is satisfied.
\end{proposition}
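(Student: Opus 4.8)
The plan is to verify the eight conditions \ref{ass:tll}--\ref{ass:fc_XY} one at a time, with the key observation that convergence in distribution together with uniform boundedness of all absolute moments yields convergence of every moment; in particular the relevant variances and third absolute moments of $\rva{X}{n}$, $\rva{Y}{n}$, and $\rva{Y'}{n}$ converge to those of $X$, $Y$, and $Y'$, and the limiting quantities are finite. First I would record this moment-convergence fact (uniform integrability of $|\rva{X}{n}|^r$ and $|\rva{Y}{n}|^r$ for all $r$, obtained from the $\limsup$ hypothesis applied at exponent $2r$). From it, $\sd{\rva{X}{n}}^2 \to \sd{X}^2$, $\sd{\rva{Y}{n}}^2 \to \sd{Y}^2$, $\tm{\rva{X}{n}} \to \tm{X}$, $\tm{\rva{Y}{n}} \to \tm{Y}$, $\Cov(\rva{X}{n},\rva{Y}{n}) \to \Cov(X,Y)$, and hence $r_n \to r$ where $r$ is the correlation of $(X,Y)$, and also $\sd{\rva{Y'}{n}}^2 \to \sd{Y'}^2$.

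Next I would use the non-degeneracy hypotheses to pin down the limits. Since $X$ has span $1$ it is non-constant, so $\sd{X}^2 > 0$; since $Y$ is not a.s. an affine function of $X$, the residual $Y' = Y - \Espe[Y] - \Cov(X,Y)\sd{X}^{-2}(X - \Espe[X])$ is not a.s. zero, so $\sd{Y'}^2 > 0$; and $|r| < 1$ because $Y$ is not a.s. an affine function of $X$ (equality $|r| = 1$ forces such an affine relation). Consequently for $n$ large the quantities $\sd{\rva{X}{n}}$, $\sd{\rva{Y}{n}}$, $\sd{\rva{Y'}{n}}$ are bounded above and below by positive constants, $|r_n| \le \ccorr < 1$ for a suitable $\ccorr$, and \ref{ass:rho_X}, \ref{ass:rho_Y} hold because the ratios $\tm{\rva{X}{n}}/\sd{\rva{X}{n}}^3$ and $\tm{\rva{Y}{n}}/\sd{\rva{Y}{n}}^3$ converge to finite limits. (For the finitely many small $n$ one absorbs everything into the constants, using that each individual $\rva{X}{n}$ is genuinely random — here one should note that the hypothesis implicitly requires $\Prob(S_n = m_n) > 0$, so $\rva{X}{n}$ is non-degenerate for every $n$; if some $\rva{X}{n}$ or the residual $\rva{Y'}{n}$ were degenerate the corresponding $\Loi(U_n)$ would be trivial and the statement is to be read for $n$ large.)

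For \ref{ass:fc_XY}, I would argue that the characteristic functions $\Espe[e^{i(s\rva{X}{n} + t\rva{Y'}{n})}]$ converge, uniformly on the compact $[-\pi,\pi] \times [-\eta_0,\eta_0]$, to $\Espe[e^{i(sX + tY')}]$ (joint convergence in distribution of $(\rva{X}{n}, \rva{Y'}{n})$ follows from that of $(\rva{X}{n},\rva{Y}{n})$ and convergence of the coefficient $\Cov(\rva{X}{n},\rva{Y}{n})\sd{\rva{X}{n}}^{-2}$, plus uniform integrability to upgrade the pointwise convergence of characteristic functions to uniform convergence on compacts via equicontinuity). For the limit vector $(X, Y')$ — centered and uncorrelated, with span-$1$ first marginal — the modulus $|\Espe[e^{i(sX + tY')}]|$ is $< 1$ on the compact set minus the origin (using the span-$1$ aperiodicity to rule out $s$ with $|\Espe[e^{isX}]| = 1$ and $\sd{Y'}^2 > 0$ to handle the $s = 0$ axis), and the expansion displayed before the proposition gives $|\Espe[e^{i(sX+tY')}]| = 1 - \tfrac12(\sd{X}^2 s^2 + \sd{Y'}^2 t^2) + o(s^2+t^2)$ near the origin; a standard compactness argument then produces a constant $c > 0$ with $|\Espe[e^{i(sX+tY')}]| \le 1 - c(\sd{X}^2 s^2 + \sd{Y'}^2 t^2)$ on the whole compact, and by the uniform convergence (and convergence of the $\sd{}$'s) the same holds for $(\rva{X}{n},\rva{Y'}{n})$ with a slightly smaller constant $\cfcX$ for $n$ large, possibly after shrinking $\eta_0$. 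Finally \ref{ass:tll} is the assertion, already flagged in the text just after Assumption \ref{hyp}, that it follows from $m_n = N_n\Espe[\rva{X}{n}] + O(\sd{\rva{X}{n}}N_n^{1/2})$ together with \ref{ass:rho_X} and \ref{ass:fc_XY}, via the local limit argument in the proof of \cite[Theorem 2.3]{Janson01}; I would simply invoke that. The main obstacle is the \ref{ass:fc_XY} step: getting a single uniform constant requires both the uniform-on-compacts convergence of characteristic functions (needing uniform integrability, not just convergence in law) and the quantitative bound $|\varphi| \le 1 - c(\cdots)$ for the limit, whose proof near the origin is the Taylor expansion but away from the origin is a genuine compactness/aperiodicity argument.
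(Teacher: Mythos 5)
The paper gives no proof of this proposition: it is explicitly omitted and delegated to Corollary 2.1 and Theorem 2.3 of \cite{Janson01}. Your verification therefore supplies more detail than anything recorded in the paper, and it follows essentially the route that Janson's cited results take: moment convergence from weak convergence plus uniform boundedness of all moments, non-degeneracy of the limit from the span-$1$ and non-affineness hypotheses, and a compactness/aperiodicity argument for the characteristic-function bound. Items \ref{ass:var_X}--\ref{ass:corr}, the reduction of \ref{ass:tll} to the local limit argument, and your caveat about finitely many small $n$ (absorbed into the constants, reading the statement asymptotically) are all handled correctly.

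The one step that does not work as written is the transfer of \ref{ass:fc_XY} from the limit $(X,Y')$ to $(\rva{X}{n},\rva{Y'}{n})$ ``by the uniform convergence \dots\ with a slightly smaller constant''. If $\abs{\varphi(s,t)} \leqslant 1 - c\,q(s,t)$ with $q(s,t) = \sd{X}^2 s^2 + \sd{Y'}^2 t^2$ and $\sup\abs{\varphi_n - \varphi} = \varepsilon_n \to 0$, you only obtain $\abs{\varphi_n} \leqslant 1 - c\,q + \varepsilon_n$, which is vacuous on the region where $q \leqslant \varepsilon_n/c$, i.e.\ precisely in a shrinking neighbourhood of the origin, which is where the bound must be proportional to $q$. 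Uniform convergence can only handle the annulus $\{\delta \leqslant s^2+t^2\}$, where $q$ is bounded below by a positive constant. Near the origin you must estimate $\varphi_n$ directly and uniformly in $n$: the third-order Taylor bound $\abs{\varphi_n(s,t) - (1 - \tfrac{1}{2} q_n(s,t))} \leqslant \tfrac{1}{6}\Espe\bigl[\abs{s(\rva{X}{n}-\Espe[\rva{X}{n}]) + t\rva{Y'}{n}}^3\bigr] \leqslant C(\abs{s}+\abs{t})^3$ with $C$ uniform in $n$ (Minkowski plus the uniform third-moment bounds), combined with the uniform lower bounds on $\sd{\rva{X}{n}}$ and $\sd{\rva{Y'}{n}}$, yields $\abs{\varphi_n} \leqslant 1 - \tfrac{1}{4} q_n$ on a fixed small ball for all large $n$. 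You have every ingredient for this on the table; the gap is only that the sentence as written invokes the wrong tool for the near-origin regime. (A minor remark: uniform convergence of the characteristic functions on compacts already follows from weak convergence via tightness and equicontinuity; uniform integrability is needed for the moment convergence, not for that step.)
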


The proof is omitted since the proposition relies on Corollary 2.1 and Theorem 2.3 in \cite{Janson01}.

%
%

\begin{theorem}\label{th:BE_cond_strong}
Under Assumption \ref{hyp}, $\tau_n^2 \defeq \ssdd{\rva{Y}{n}}^2 (1-r_n^2)  > 0$ and we have
\begin{equation}\label{eq:be}
\sup_{x \in \R} \left|\Prob\left(\frac{U_n-N_n \Espe\left[\rva{Y}{n}\right]-r_n \ssdd{\rva{Y}{n}}\ssdd{\rva{X}{n}}^{-1}(m_n - N_n\Espe\left[\rva{X}{n}\right])}{N_n^{1/2}\tau_n}\leqp x \right)-\Phi(x)\right|\leqp \frac{C}{N_n^{1/2}},
\end{equation}
where $\Phi$ denotes the standard normal cumulative distribution function and $C$ is a positive constant that only depends on $\cvarXtilde$, $\cvarX$, $\crhoX$, $\cvarYtilde$, $\cvarY$, $\crhoY$, $\ccorr$, $\cfcX$, $\eta_0$, and $\ctll$.
\end{theorem}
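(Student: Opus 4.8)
The plan is to strip off the affine centring and scaling in \eqref{eq:be}, reducing it to a Berry--Esseen estimate for a sum conditioned on $\{S_n=m_n\}$, to represent the corresponding conditional characteristic function by Fourier inversion over $[-\pi,\pi]$ (legitimate since $S_n$ is integer-valued), and then to run Esseen's smoothing inequality, the core being an Edgeworth-type expansion of the integrand with all constants controlled through those of Assumption~\ref{hyp}. First, $\tau_n^2=\sd{\rva{Y}{n}}^2(1-r_n^2)\geqslant\cvarYtilde^{\,2}(1-\ccorr^2)>0$ by \ref{ass:var_Y} and \ref{ass:corr}. Writing $\beta_n\defeq r_n\sd{\rva{Y}{n}}\sd{\rva{X}{n}}^{-1}=\Cov(\rva{X}{n},\rva{Y}{n})\sd{\rva{X}{n}}^{-2}$ and, for $1\leqslant i\leqslant N_n$, $\rva[i]{Y'}{n}\defeq\rva[i]{Y}{n}-\Espe[\rva{Y}{n}]-\beta_n(\rva[i]{X}{n}-\Espe[\rva{X}{n}])$ (so these are i.i.d.\ copies of the residual $\rva{Y'}{n}$ of \ref{ass:fc_XY}: centred, uncorrelated with $\rva{X}{n}$, with variance $\tau_n^2$, and $\Espe[\,\abs{\rva{Y'}{n}}^3\,]$ bounded in terms of the constants by \ref{ass:var_X}--\ref{ass:corr} and the triangle inequality), one has $T_n=N_n\Espe[\rva{Y}{n}]+\beta_n(S_n-N_n\Espe[\rva{X}{n}])+T'_n$ with $T'_n\defeq\sum_{i=1}^{N_n}\rva[i]{Y'}{n}$; hence, conditionally on $\{S_n=m_n\}$, the variable in \eqref{eq:be} has the law of $W_n\defeq T'_n/(N_n^{1/2}\tau_n)$ given $\{S_n=m_n\}$, and it is enough to prove $\sup_x\abs{\Prob(W_n\leqslant x\mid S_n=m_n)-\Phi(x)}\leqslant CN_n^{-1/2}$.

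Set $\mu_n\defeq m_n-N_n\Espe[\rva{X}{n}]$, $\varphi_n(s,t)\defeq\Espe[\exp(is(\rva{X}{n}-\Espe[\rva{X}{n}])+it\rva{Y'}{n})]$ and $\psi_n(u)\defeq\Espe[e^{iuW_n}\mid S_n=m_n]$. Inserting $\indic_{\{S_n=m_n\}}=(2\pi)^{-1}\int_{-\pi}^{\pi}e^{is(S_n-m_n)}\,ds$, using independence, and substituting $s=w(\sd{\rva{X}{n}}N_n^{1/2})^{-1}$ gives
\[
\psi_n(u)=\frac{1}{\cn}\int_{-\pi\sd{\rva{X}{n}}N_n^{1/2}}^{\pi\sd{\rva{X}{n}}N_n^{1/2}}e^{-iw\mu_n/(\sd{\rva{X}{n}}N_n^{1/2})}\,\varphi_n\!\Bigl(\frac{w}{\sd{\rva{X}{n}}N_n^{1/2}},\frac{u}{\tau_n N_n^{1/2}}\Bigr)^{\!N_n}dw ,
\]
with $\cn=2\pi\sd{\rva{X}{n}}N_n^{1/2}\Prob(S_n=m_n)\geqslant\ctll$ by \ref{ass:tll}. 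Specializing to $u=0$ and combining \ref{ass:tll} with the classical (uniform) local limit theorem for $S_n$ (valid here by \ref{ass:var_X}, \ref{ass:rho_X} and \ref{ass:fc_XY}) yields the a priori bounds $\abs{\mu_n}\leqslant C'\sd{\rva{X}{n}}N_n^{1/2}$ and $\cn=\sqrt{2\pi}\,e^{-\mu_n^2/(2\sd{\rva{X}{n}}^2N_n)}(1+O(N_n^{-1/2}))$, which are exactly what makes the Gaussian ``reference'' integral $\int_{\R}e^{-iw\mu_n/(\sd{\rva{X}{n}}N_n^{1/2})}e^{-w^2/2}\,dw=\sqrt{2\pi}\,e^{-\mu_n^2/(2\sd{\rva{X}{n}}^2N_n)}$ reproduce $e^{-u^2/2}$ to within $O(N_n^{-1/2})$.

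By Esseen's inequality with $T\defeq\eta_0\cvarYtilde(1-\ccorr^2)^{1/2}N_n^{1/2}$ (so $u/(\tau_n N_n^{1/2})\in[-\eta_0,\eta_0]$ for $\abs{u}\leqslant T$), $\sup_x\abs{\Prob(W_n\leqslant x\mid S_n=m_n)-\Phi(x)}\leqslant\pi^{-1}\int_{-T}^{T}\abs{u}^{-1}\abs{\psi_n(u)-e^{-u^2/2}}\,du+O(N_n^{-1/2})$, and I would split $[-T,T]$ into $\abs{u}\leqslant A_0$, $A_0\leqslant\abs{u}\leqslant b_n$, $b_n\leqslant\abs{u}\leqslant T$, with $A_0$ a fixed constant and $b_n\defeq(c^{-1}\log N_n)^{1/2}$ for a suitable $c>0$. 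For $\abs{u}\geqslant b_n$, \ref{ass:fc_XY} gives $\abs{\varphi_n(s,u/(\tau_n N_n^{1/2}))}\leqslant 1-\cfcX u^2N_n^{-1}$ for every $s\in[-\pi,\pi]$, whence $\abs{\psi_n(u)}\leqslant Ce^{-\cfcX u^2}$ and this piece contributes $O(N_n^{-1})$ to the integral. For $\abs{u}\leqslant b_n$ one has $u/(\tau_n N_n^{1/2})=o(1)$ and $u^2=O(\log N_n)\ll N_n^{1/3}$, so on the inner part $\abs{s}\leqslant\delta$ (small fixed $\delta$) of the $w$-integral the Edgeworth expansion $N_n\log\varphi_n=-\tfrac12(w^2+u^2)+\tfrac{1}{6\sqrt{N_n}}\chi_n(w,u)+O((\abs{w}+\abs{u})^4/N_n)$ is available on $w^2\leqslant cN_n^{1/3}$ — with $\chi_n$ a homogeneous cubic whose coefficients, the normalized joint third cumulants of $(\rva{X}{n},\rva{Y'}{n})$, are bounded in terms of the constants of Assumption~\ref{hyp} — a crude bound $\abs{\varphi_n^{N_n}}\leqslant e^{-c(w^2+u^2)}$ handles the rest of $\abs{s}\leqslant\delta$, and on $\delta<\abs{s}\leqslant\pi$ one has $\abs{\varphi_n^{N_n}}\leqslant e^{-cN_n}$. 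Integrating term by term against $e^{-iw\mu_n/(\sd{\rva{X}{n}}N_n^{1/2})}e^{-(w^2+u^2)/2}$ and using the estimates on $\cn$ from the previous paragraph gives, for $\abs{u}\leqslant b_n$,
\[
\psi_n(u)=e^{-u^2/2}+N_n^{-1/2}e^{-u^2/2}P_n(u)+R_n(u) ,
\]
with $P_n$ a polynomial of degree $\leqslant3$ whose coefficients are bounded in terms of the constants and $\abs{R_n(u)}\leqslant CN_n^{-1}(1+\abs{u}^6)e^{-u^2/4}$. It is here that the uncorrelatedness of $\rva{Y'}{n}$ and $\rva{X}{n}$ is essential: it removes the cross term $\propto wu$ in the quadratic part of $N_n\log\varphi_n$, so that the leading term is genuinely $e^{-u^2/2}$ and not a shifted, variance-reduced Gaussian; equivalently, $\Espe[T'_n\mid S_n=m_n]$ stays bounded and hence $\psi_n'(0)=O(N_n^{-1/2})$. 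Running the same expansion for $\psi_n'$ yields $\psi_n'(u)=-ue^{-u^2/2}+O(N_n^{-1/2})$ uniformly on $\abs{u}\leqslant A_0$.

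These estimates close the argument: on $A_0\leqslant\abs{u}\leqslant b_n$ the display gives $\abs{\psi_n(u)-e^{-u^2/2}}\leqslant N_n^{-1/2}\abs{P_n(u)}e^{-u^2/2}+\abs{R_n(u)}\leqslant CN_n^{-1/2}(1+\abs{u}^3)e^{-u^2/8}$, so dividing by $\abs{u}\geqslant A_0$ and integrating gives $O(N_n^{-1/2})$; on $\abs{u}\leqslant A_0$ one writes $\psi_n(u)-e^{-u^2/2}=\int_0^u(\psi_n'(t)+te^{-t^2/2})\,dt$ and bounds it by $CN_n^{-1/2}\abs{u}$ via the last estimate above (using $\psi_n(0)=1$), so this contributes $O(N_n^{-1/2})$ after division by $\abs{u}$ as well; together with the $O(N_n^{-1})$ from $\abs{u}\geqslant b_n$ and the $O(N_n^{-1/2})$ from Esseen's inequality this proves \eqref{eq:be} (for $N_n$ larger than a threshold depending only on the constants; for smaller $N_n$ the left-hand side is $\leqslant1$ and the bound is trivial after enlarging $C$). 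I expect the bulk of the work to be the inner-region Edgeworth expansion with every implied constant controlled solely through $\ctll,\dots,\eta_0$ and uniformly over the admissible $m_n$ — this is where the a priori bound $\abs{\mu_n}\leqslant C'\sd{\rva{X}{n}}N_n^{1/2}$ and the uniform moment and cumulant estimates from \ref{ass:var_X}--\ref{ass:corr} enter — together with the careful accounting of the cancellation at $u=0$, namely $\psi_n'(0)=O(N_n^{-1/2})$, without which a logarithmic loss would creep into the smoothing integral near the origin; this last fact is precisely what the centring constant $r_n\sd{\rva{Y}{n}}\sd{\rva{X}{n}}^{-1}(m_n-N_n\Espe[\rva{X}{n}])$ in \eqref{eq:be} is designed to guarantee.
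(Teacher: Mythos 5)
Your overall architecture --- project $\rva{Y}{n}$ onto $\rva{Y'}{n}$ so that the conditioning contributes exactly the affine shift appearing in \eqref{eq:be}, represent the conditional characteristic function by Fourier inversion of $\indic_{\{S_n=m_n\}}$ over $[-\pi,\pi]$, and run Esseen's smoothing inequality with $T\asymp N_n^{1/2}$ --- is the paper's, and your treatment of $\delta<\abs{s}\leqslant\pi$ via \ref{ass:fc_XY} is essentially Lemma \ref{lem:I2}. Where you genuinely diverge is in the core estimate. The paper never normalizes $\cn$ through a local limit theorem and never isolates a neighbourhood of $u=0$: it writes $u^{-1}\abs{\psi_n(\cdot)/(2\pi\Prob(S_n=m_n))-e^{-u^2/2}}=e^{-u^2/2}\,u^{-1}\abs{e^{u^2/2}\psi_n(\cdot)/(2\pi\Prob(S_n=m_n))-1}$, notes that the bracket vanishes at $u=0$, and bounds the increment by $\sup_{0\leqslant\theta\leqslant u}$ of the $t$-derivative of $e^{t^2/2}\varphi_n^{N_n}$ integrated in $s$; the phase $e^{-isv_n}$ then only ever enters with modulus one, the division by $u$ is absorbed by the mean value theorem, and the inner $s$-region is handled by importing the Quine--Robinson derivative bound (Lemma \ref{lem:lem2}) rather than by an Edgeworth expansion. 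Your route therefore needs two extra inputs that the paper's trick makes unnecessary: a quantitative local limit theorem giving $\cn=\sqrt{2\pi}\,e^{-v_n^2/2}(1+O(N_n^{-1/2}))$ together with the a priori bound $\abs{v_n}\leqslant C'$ (this is not assumed; it must be extracted from \ref{ass:tll} plus that LLT, as you indicate, and only for $N_n$ beyond a threshold), and the cancellation $\psi_n'(0)=O(N_n^{-1/2})$ near the origin (which is genuine: it is the content of \eqref{eq:moment}, proved in the paper from exactly your observation that $\Espe[\rva{Y'}{n}]=\Espe[\rva{X}{n}\rva{Y'}{n}]=0$ kills the constant and linear terms of $\partial_t\varphi_n(\cdot,0)$). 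Both roads lead to \eqref{eq:be} with a constant depending only on the listed quantities; yours is longer but self-contained, the paper's is shorter but leans on an external lemma.

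One over-claim should be repaired: Assumption \ref{hyp} only provides third absolute moments (\ref{ass:rho_X}, \ref{ass:rho_Y}), so the expansion $N_n\log\varphi_n=-\tfrac12(w^2+u^2)+\tfrac{1}{6\sqrt{N_n}}\chi_n(w,u)+O((\abs{w}+\abs{u})^4/N_n)$ is not available --- extracting the cubic cumulant term with a quartic remainder requires fourth moments. What you actually use downstream is only the third-moment version $\abs{N_n\log\varphi_n+\tfrac12(w^2+u^2)}\leqslant C(\crhoX+\crhoY)N_n^{-1/2}(\abs{w}+\abs{u})^3$ on the inner region, which already yields $\abs{\psi_n(u)-e^{-u^2/2}}\leqslant CN_n^{-1/2}(1+\abs{u}^3)e^{-u^2/8}$ there; restate the expansion in that weaker form (absorbing the would-be cubic correction into the $O(N_n^{-1/2})$ error) and the argument closes as written.
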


Remark that the standardization of the variables $U_n$ involved in \eqref{eq:be} is not the natural one. The following theorem fixes this default of standardization.

\begin{proposition}\label{prop:moment_U}
Under \ref{ass:tll}, \ref{ass:rho_X}, \ref{ass:var_Y}, \ref{ass:rho_Y}, and \ref{ass:fc_XY}, there exist two positive constants $\cexpU$ and $\cvarU$ depending only on $\crhoX$, $\cvarY$, $\crhoY$, $\cfcX$, and $\ctll$ such that, for $N_n \geqslant 3$,
\begin{equation}\label{eq:moment}
\abs{\Espe\left[U_n\right] - N_n \Espe[\rva{Y}{n}] - r_n \ssdd{\rva{Y}{n}} \ssdd{\rva{X}{n}}^{-1} (m_n - N_n\Espe[\rva{X}{n}])} \leqslant \cexpU
\end{equation}
and
\begin{equation}\label{eq:moment2}
\abs{\Var\left(U_n\right) - N_n \tau_n^2} \leqslant \cvarU N_n^{1/2}.
\end{equation}
\end{proposition}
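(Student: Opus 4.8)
The plan is to reduce \eqref{eq:moment}--\eqref{eq:moment2} to estimates of the first two conditional moments of the sum of the centered, $\rva{X}{n}$-uncorrelated variables $\rva{Y'}{n}$ of \ref{ass:fc_XY}, and then to evaluate those moments by Fourier inversion on the circle. \emph{Step 1 (reduction).} Let $\rva[i]{Y'}{n}$ be defined from $(\rva[i]{X}{n},\rva[i]{Y}{n})$ by the same formula as $\rva{Y'}{n}$, and put $T'_n\defeq\sum_{i=1}^{N_n}\rva[i]{Y'}{n}$. Since $\Cov(\rva{X}{n},\rva{Y}{n})\sd{\rva{X}{n}}^{-2}=r_n\sd{\rva{Y}{n}}\sd{\rva{X}{n}}^{-1}$, on the event $\{S_n=m_n\}$ one has $T_n=T'_n+N_n\Espe[\rva{Y}{n}]+r_n\sd{\rva{Y}{n}}\sd{\rva{X}{n}}^{-1}(m_n-N_n\Espe[\rva{X}{n}])$, the added term being a constant. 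Hence \eqref{eq:moment} is exactly $\bigl|\Especond{T'_n}{S_n=m_n}\bigr|\leqp\cexpU$ and, the constant shift cancelling in the variance, \eqref{eq:moment2} is exactly $\bigl|\Varcond{T'_n}{S_n=m_n}-N_n\tau_n^2\bigr|\leqp\cvarU N_n^{1/2}$; note that $\tau_n^2=\sd{\rva{Y}{n}}^2(1-r_n^2)=\Var(\rva{Y'}{n})=\Espe[(\rva{Y'}{n})^2]$, as $\rva{Y'}{n}$ is centered and uncorrelated with $\rva{X}{n}$.

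\emph{Step 2 (Fourier representation).} Set $\bar{X}\defeq\rva{X}{n}-\Espe[\rva{X}{n}]$, $\chi_n(s)\defeq\Espe[e^{is\bar{X}}]$, $\mu_n\defeq m_n-N_n\Espe[\rva{X}{n}]$, $\tilde{g}_n(s)\defeq\Espe[\rva{Y'}{n} e^{is\bar{X}}]$, $\tilde{h}_n(s)\defeq\Espe[(\rva{Y'}{n})^2 e^{is\bar{X}}]$. Inverting the Fourier series of the integer-valued $S_n$, using the independence of the pairs $(\rva[i]{X}{n},\rva[i]{Y'}{n})$ (whence $\Espe[e^{isS_n+itT'_n}]=\Espe[e^{i(s\rva{X}{n}+t\rva{Y'}{n})}]^{N_n}$), differentiating in $t$ at $0$, and absorbing the phase $e^{isN_n\Espe[\rva{X}{n}]}$ into $e^{-ism_n}$, one obtains
\begin{align*}
D_n&\defeq\Prob(S_n=m_n)=\frac{1}{2\pi}\int_{-\pi}^{\pi}e^{-is\mu_n}\chi_n(s)^{N_n}\,ds,\\
\Espe\bigl[T'_n\indic_{\{S_n=m_n\}}\bigr]&=\frac{N_n}{2\pi}\int_{-\pi}^{\pi}e^{-is\mu_n}\chi_n(s)^{N_n-1}\tilde{g}_n(s)\,ds,\\
\Espe\bigl[(T'_n)^2\indic_{\{S_n=m_n\}}\bigr]&=\frac{N_n}{2\pi}\int_{-\pi}^{\pi}e^{-is\mu_n}\chi_n(s)^{N_n-1}\tilde{h}_n(s)\,ds+\frac{N_n(N_n-1)}{2\pi}\int_{-\pi}^{\pi}e^{-is\mu_n}\chi_n(s)^{N_n-2}\tilde{g}_n(s)^2\,ds.
\end{align*}
Since $\Espe[\rva{Y'}{n}]=0$, $\Espe[\rva{Y'}{n}\bar{X}]=\Cov(\rva{X}{n},\rva{Y'}{n})=0$ and $\Espe[\bar{X}]=0$, elementary Taylor bounds give $|\tilde{g}_n(s)|\leqp\tfrac{1}{2}s^2\Espe[|\rva{Y'}{n}|\bar{X}^2]$, $|\tilde{h}_n(s)-\tau_n^2|\leqp|s|\,\Espe[(\rva{Y'}{n})^2|\bar{X}|]$ and $|1-\chi_n(s)|\leqp\tfrac{1}{2}s^2\sd{\rva{X}{n}}^2$; moreover $\Espe[|\rva{Y'}{n}|^3]\leqp 4(\crhoX+\crhoY)\cvarY^3$ by $|r_n|\leqp1$ (Cauchy--Schwarz), \ref{ass:rho_X}, \ref{ass:rho_Y} and \ref{ass:var_Y}, so H\"older's inequality together with \ref{ass:rho_X} yields $\Espe[|\rva{Y'}{n}|\bar{X}^2]\leqp C\sd{\rva{X}{n}}^2$ and $\Espe[(\rva{Y'}{n})^2|\bar{X}|]\leqp C\sd{\rva{X}{n}}$ with $C$ depending only on $\crhoX,\cvarY,\crhoY$.

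\emph{Step 3 (estimates and conclusion).} By \ref{ass:fc_XY} with $t=0$, $|\chi_n(s)|=\abs{\Espe[e^{is\rva{X}{n}}]}\leqp e^{-\cfcX\sd{\rva{X}{n}}^2 s^2}$ on $\intervalleff{-\pi}{\pi}$; bounding each remainder integral above by the corresponding Gaussian integral over $\R$ (using $N_n\geqp3$ to keep the exponent comparable to $\cfcX\sd{\rva{X}{n}}^2 s^2$) and computing, one finds $\bigl|\Espe[T'_n\indic_{\{S_n=m_n\}}]\bigr|$ and $\tfrac{N_n}{2\pi}\bigl|\int_{-\pi}^{\pi}e^{-is\mu_n}\chi_n(s)^{N_n-1}(1-\chi_n(s))\,ds\bigr|$ of order $\sd{\rva{X}{n}}^{-1}N_n^{-1/2}$, and the two integrals featuring $\tilde{g}_n^2$ and $\tilde{h}_n-\tau_n^2$ of order $\sd{\rva{X}{n}}^{-1}$, all with constants depending only on $\crhoX,\cvarY,\crhoY,\cfcX$. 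Dividing by $D_n\geqp\ctll/(2\pi\sd{\rva{X}{n}}N_n^{1/2})$ (Assumption \ref{ass:tll}) cancels the factor $\sd{\rva{X}{n}}^{-1}$, giving $|\Especond{T'_n}{S_n=m_n}|=O(1)$, i.e.\ \eqref{eq:moment}. Writing $\tilde{h}_n=\tau_n^2+(\tilde{h}_n-\tau_n^2)$ and $\tfrac{1}{2\pi}\int e^{-is\mu_n}\chi_n(s)^{N_n-1}\,ds=D_n+\tfrac{1}{2\pi}\int e^{-is\mu_n}\chi_n(s)^{N_n-1}(1-\chi_n(s))\,ds$ in the second-moment identity, and using $\tau_n^2\leqp\cvarY^2$, we get $\Especond{(T'_n)^2}{S_n=m_n}=N_n\tau_n^2+O(N_n^{1/2})$, hence $\Varcond{T'_n}{S_n=m_n}=\Especond{(T'_n)^2}{S_n=m_n}-\bigl(\Especond{T'_n}{S_n=m_n}\bigr)^2=N_n\tau_n^2+O(N_n^{1/2})$, which by Step 1 is \eqref{eq:moment2}; throughout, $\cexpU$ and $\cvarU$ depend only on $\crhoX,\cvarY,\crhoY,\cfcX,\ctll$.

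\emph{Main obstacle.} The content lies entirely in the bookkeeping of Step 3: one must check that each remainder integral carries \emph{exactly one} factor $\sd{\rva{X}{n}}^{-1}$, so that it is exactly cancelled by the lower bound on $D_n$ coming from \ref{ass:tll}, and the final constants depend only on $\crhoX,\cvarY,\crhoY,\cfcX,\ctll$ (in particular not on an upper bound for $\sd{\rva{X}{n}}$). This is precisely why one centers $\rva{X}{n}$ and pushes all $\Espe[\rva{X}{n}]$-dependent phases into $e^{-is\mu_n}$, and why the $\sd{\rva{X}{n}}^2 s^2$-weighted Gaussian decay provided by \ref{ass:fc_XY} is the right amount. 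The only further care needed is in the small-$N_n$ cases of the elementary inequalities used (such as $N_n/(N_n-1)^{3/2}\leqp 2N_n^{-1/2}$ for $N_n\geqp3$), which are routine.
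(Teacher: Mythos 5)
Your proof is correct and follows essentially the same route as the paper: reduction to the projected variables $\rva{Y'}{n}$, Fourier inversion on the circle for the first two conditional moments, Taylor plus H\"older bounds on the integrands, Gaussian decay from \ref{ass:fc_XY}, and cancellation of the $\sd{\rva{X}{n}}^{-1}$ factor via the lower bound on $\Prob(S_n=m_n)$ from \ref{ass:tll}. The only differences (working on $\intervalleff{-\pi}{\pi}$ without rescaling, and splitting the second-moment correction as $(\tilde{h}_n-\tau_n^2)+\tau_n^2(1-\chi_n)$ rather than as a single centered exponential) are cosmetic.
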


\begin{theorem}\label{th:BE_cond_strong_U}
Under Assumption \ref{hyp}, we have
\begin{equation}\label{eq:be_2}
\sup_{x \in \R} \left|\Prob\left(\frac{U_n- \Espe\left[U_n\right]}{\Var\left(U_n\right)^{1/2}}\leqp x \right)-\Phi(x)\right|\leqp \frac{\widetilde{C}}{N_n^{1/2}},
\end{equation}
where $\widetilde{C}$ is a constant that only depends on $\cvarXtilde$, $\cvarX$, $\crhoX$, $\cvarYtilde$, $\cvarY$, $\crhoY$, $\ccorr$, $\cfcX$, $\eta_0$, and $\ctll$.
\end{theorem}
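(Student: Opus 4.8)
The plan is to deduce Theorem~\ref{th:BE_cond_strong_U} from Theorem~\ref{th:BE_cond_strong} together with Proposition~\ref{prop:moment_U}, treating the latter as a correction of the standardization. Write $a_n \defeq N_n \Espe[\rva{Y}{n}] + r_n \sd{\rva{Y}{n}} \sd{\rva{X}{n}}^{-1}(m_n - N_n\Espe[\rva{X}{n}])$ for the centering used in \eqref{eq:be} and $b_n \defeq N_n^{1/2}\tau_n$ for the scaling there, and let $\mu_n \defeq \Espe[U_n]$, $\sigma_n \defeq \Var(U_n)^{1/2}$ be the true moments. By Assumption~\ref{hyp}, \ref{ass:var_Y} and \ref{ass:corr} give $\tau_n^2 = \sd{\rva{Y}{n}}^2(1-r_n^2) \geqp \cvarYtilde^2(1-\ccorr^2) > 0$ and $\tau_n^2 \leqp \cvarY^2$, so $b_n$ is comparable to $N_n^{1/2}$ with constants depending only on the allowed parameters. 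Proposition~\ref{prop:moment_U} then gives $|\mu_n - a_n| \leqp \cexpU$ and $|\sigma_n^2 - b_n^2| \leqp \cvarU N_n^{1/2}$, hence (for $N_n$ large, absorbing small $n$ into the constant by enlarging $\widetilde C$ if necessary, since the left side of \eqref{eq:be_2} is always $\leqp 1$) $|\sigma_n/b_n - 1| \leqp c/N_n^{1/2}$ and $|\mu_n - a_n|/b_n \leqp c/N_n^{1/2}$ for a constant $c$ of the required type.

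Next I would compare the two normalized probabilities. For fixed $x$, the event $\{(U_n - \mu_n)/\sigma_n \leqp x\}$ equals $\{(U_n - a_n)/b_n \leqp x'\}$ with $x' = (\sigma_n x + \mu_n - a_n)/b_n = x\,\sigma_n/b_n + (\mu_n-a_n)/b_n$. So
\begin{align*}
\left|\Prob\!\left(\frac{U_n-\mu_n}{\sigma_n}\leqp x\right) - \Phi(x)\right|
&\leqp \left|\Prob\!\left(\frac{U_n-a_n}{b_n}\leqp x'\right) - \Phi(x')\right| + |\Phi(x') - \Phi(x)| \\
&\leqp \frac{C}{N_n^{1/2}} + |\Phi(x') - \Phi(x)|,
\end{align*}
using Theorem~\ref{th:BE_cond_strong} for the first term. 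It remains to bound $\sup_x |\Phi(x') - \Phi(x)|$ by $c/N_n^{1/2}$. Since $\Phi$ is Lipschitz with constant $\norme{\Phi'}_\infty = (2\pi)^{-1/2}$, $|\Phi(x')-\Phi(x)| \leqp (2\pi)^{-1/2}|x'-x| = (2\pi)^{-1/2}\,|x(\sigma_n/b_n - 1) + (\mu_n-a_n)/b_n|$. The additive term is $\leqp c/N_n^{1/2}$ directly; the term $|x|\,|\sigma_n/b_n-1|$ is not bounded uniformly in $x$ by Lipschitz alone, so there one uses instead that $\Phi(x')-\Phi(x) = \int_x^{x'}\Phi'(u)\,du$ with $\Phi'(u) = (2\pi)^{-1/2}e^{-u^2/2}$ and the elementary estimate $|x|\,e^{-\min(x,x')^2/2}\leqp c'$ for $x'$ between $x/2$ and $2x$ (which holds once $|\sigma_n/b_n - 1| \leqp 1/2$, i.e. for $N_n$ large); more simply, $\sup_x |x|\,e^{-x^2/2}\!\int$-type bounds show $\sup_x|\Phi(\lambda x + \delta) - \Phi(x)| \leqp c(|\lambda - 1| + |\delta|)$ for $|\lambda-1|\leqp 1/2$.

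Combining, $\sup_x|\Prob((U_n-\mu_n)/\sigma_n \leqp x) - \Phi(x)| \leqp \widetilde C / N_n^{1/2}$ with $\widetilde C$ depending only on the parameters listed in Theorem~\ref{th:BE_cond_strong} and in Propositions~\ref{prop:moment_U}, which is exactly \eqref{eq:be_2}. The only mildly delicate point is the uniform-in-$x$ control of $|\Phi(\lambda_n x) - \Phi(x)|$ where $\lambda_n = \sigma_n/b_n$: this is where one must use the Gaussian tail decay rather than mere Lipschitz continuity of $\Phi$, and it requires $N_n$ large enough that $\lambda_n$ is close to $1$ (say within $1/2$), with the finitely many remaining values of $n$ absorbed by taking $\widetilde C$ large. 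Everything else is bookkeeping with the constants from Assumption~\ref{hyp}.
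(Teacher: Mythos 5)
Your proposal is correct and follows essentially the same route as the paper: both decompose the error into the Berry--Esseen term of Theorem~\ref{th:BE_cond_strong} evaluated at the affinely shifted point plus $\abs{\Phi(\lambda_n x+\delta_n)-\Phi(x)}$, control the latter via the Gaussian density decay (not mere Lipschitz continuity) using the moment bounds of Proposition~\ref{prop:moment_U}, and absorb the finitely many small values of $N_n$ into the constant. The only cosmetic difference is that the paper carries this out after the reduction to $\Espe[\rva{Y}{n}]=0$, $r_n=0$, so its centering term vanishes.
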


Furthermore, as in \cite{Janson01}, the results of Theorems  \ref{th:BE_cond_strong} and \ref{th:BE_cond_strong_U} simplify considerably in the special case when the vector  $(\rva{X}{n}, \rva{Y}{n})$ does not depend on $n$, that is to say when we consider an i.i.d.\ sequence instead of a triangular array. This is a consequence of Proposition \ref{cor:cv_distrib}.


\section{Classical examples}\label{sec:example}

In this section, we describe the examples mentioned in \cite{Janson01} and \cite{Holst79}. Each of them satisfies the assumptions of Proposition \ref{cor:cv_distrib}, as shown in \cite{Janson01}, leading to a Berry-Esseen inequality.

\subsection{Occupancy problem}\label{exocc}

In the classical occupancy problem, $m$ balls are thrown uniformly at random into $N$ urns. The resulting numbers of balls $(Z_1, \dots, Z_N)$ have a multinomial distribution. It is well known that $(Z_1, \dots, Z_N)$ is also distributed as $(X_1, \cdots, X_N)$ conditioned on $\{ \sum_{i=1}^N X_i = m \}$, where the random variables $X_i$ are i.i.d., with $X_i \sim \mathcal{P}(\lambda),$ for any arbitrary $\lambda > 0$. The classical occupancy problem studies the number of empty urns $U = \sum_{i=1}^N \indic_{\{Z_i = 0\}}$, which is distributed as $ \sum_{i=1}^N \indic_{\{X_i = 0\}}$ conditioned on $\{ \sum_{i=1}^N X_i = m \}$. Now, if $m = m_n \to \infty$ and $N = N_n \to \infty$ with $m_n/N_n \to \lambda \in \intervalleoo{0}{\infty}$, we can take $\rva{X}{n} \sim \mathcal{P}(\lambda_n)$ with $\lambda_n \defeq m_n/N_n$, $\rva{Y}{n} = \indic_{\{\rva{X}{n} = 0\}}$, and apply Proposition \ref{cor:cv_distrib} to obtain a Berry-Esseen inequality for $U_n = \sum_{i=1}^{N_n} \indic_{\{Z_i = 0\}}$.

\begin{remark}
In \cite{QR82}, the authors prove a Berry-Esseen inequality for the occupancy problem in a more general setting: the probability of landing in each urn may be different. The tools they developed will be used in the sequel to prove our results.  
\end{remark}

\begin{remark}
Here, we need a result for triangular arrays, and not only for i.i.d.\ sequences. Indeed, if we took $\rva{X}{n} = X$ with $X \sim \mathcal{P}(\lambda)$, we would only have
\[
m_n = N_n(\lambda + o(1)) = N_n \Espe[\rva{X}{n}] + o(N_n) .
\]
But Proposition \ref{cor:cv_distrib} requires 
\[
m_n = N_n \Espe[X] + O(N_n^{1/2}) ,
\]
which is stronger. This remark goes for the following examples too.
\end{remark}

\subsection{Bose-Einstein statistics}

This example is borrowed from \cite{Holst79} (see also \cite{Feller68}). Consider $N$ urns and put $m$ indistinguishable balls in the urns in such a way that each distinguishable outcome has the same probability $1/ \binom{m+N-1}{m}$. Let $Z_k$ be the number of balls in the $k$\th{} urn. It is well known that $(Z_1, \dots, Z_N)$ is distributed as $(X_1, \dots, X_N)$ conditioned on $\{\sum_{i=1}^N X_i = m\}$, where the random variables $X_i$ are i.i.d., with $X_i \sim \mathcal{G}(p),$ for any arbitrary $p \in \intervalleoo{0}{1}$. If $m = n$, $N = N_n \to \infty$ with $N_n/n \to p$, take $\rva{X}{n} \sim \mathcal{G}(p_n)$ with $p_n = N_n/n$ to obtain a Berry-Esseen inequality for any sequence of variables of the type $U_n = \sum_{i=1}^{N_n} f(Z_i)$.

\subsection{Branching processes}

Consider a Galton-Watson process, beginning with one individual, where the number of children of an individual is given by a random variable $X$ having finite moments. Assume further that $\Espe[X]=1$. We number the individuals as they appear. Let $X_i$ be the number of children of the $i$\th{} individual and $S_k \defeq \sum_{i=1}^k X_i$. It is well known (see \cite[Example 3.4]{Janson01} and the references therein) that the total progeny $S_N+1$ is $N \geqp 1$ if and only if
\begin{equation} \label{GW1}
\forall k \in \{ 0, \dots , N-1 \} \quad S_k \geqp k \quad \text{and} \quad S_N = N-1 .
\end{equation}
This type of conditioning is different from the one studied in the present paper, but by \cite[Corollary 2]{Wendel75} and \cite[Example 3.4]{Janson01}, if we ignore the cyclical order of $X_1, \dots, X_N$, it is proven that $X_1, \dots, X_N$ have the same distribution conditioned on \eqref{GW1} as conditioned on $\{ S_N = N-1 \}$. Applying Proposition \ref{cor:cv_distrib} with $N = n$ and $m = n-1$, we obtain a Berry-Esseen inequality for any sequence of variables $U_n$ distributed as $T_n = \sum_{i=1}^n f(X_i)$ conditioned on $\{ S_n = n-1 \}$. For instance, if $f(x) = \indic_{\{x = 3\}}$, $U_n$ is the number of individuals with three children given that the total progeny is $n$.

\subsection{Random forests} 

Consider a uniformly distributed random labeled rooted forest with $m$ vertices and $N$ roots with $N < m$. Without loss of generality, we may assume that the vertices are $1, \dots, m$ and, by symmetry, that the roots are the first $N$ vertices. Following \cite{Janson01}, this model can be realized as follows. The sizes of the $N$ trees in the forest are distributed as $(X_1, \dots , X_N)$ conditioned on $\{ \sum_{i=1}^N X_i = m \}$, where the random variables $X_i$ are i.i.d.\ and Borel distributed for any arbitrary parameter $\mu \in \intervalleoo{0}{1}$, i.e.
\[
\Prob(X_i = l) = e^{-\mu l} \frac{(\mu l)^{l-1}}{l!}
\]
(see, e.g., \cite{FPV98} or \cite{Janson01a} for more details). Then, the $i$\th{} tree is drawn uniformly among the trees of size $X_i$.  Proposition \ref{cor:cv_distrib} provides a Berry-Esseen inequality for any sequence of variables of the type $U_n = \sum_{i=1}^{N_n} f(Z_i)$ where $N_n \to \infty$ and $Z_1$, ..., $Z_{N_n}$ are the sizes of the trees in the forest. For instance, if $f(x) = \indic_{\{ x = K \}}$, $U_n$ is the number of trees of size $K$ in the forest (see, e.g., \cite{Kolchin84,Pavlov77,Pavlov96}).

\subsection{Hashing with linear probing}

Hashing with linear probing is a classical model in theoretical computer science that appeared in the 60's. It has been studied from a mathematical point of view firstly in \cite{Knuth74}. For more details on the model, we refer to \cite{FPV98, Janson01a, Marckert01-1, Chassaing02, ChF03, Janson05}. The model describes the following experiment. One throws $n$ balls sequentially into $m$ urns at random with $m > n$; the urns are arranged in a circle and numbered clockwise. A ball that lands in an occupied urn is moved to the next empty urn, always moving clockwise. The length of the move is called the displacement of the ball and we are interested in the sum of all displacements which is a random variable denoted $d_{m,n}$. After throwing all balls, there are $N \defeq m-n$ empty urns. These divide the occupied urns into blocks of consecutive urns. We consider that the empty urn following a block belongs to this block.
Following \cite{Knuth98a,FPV98}, Janson \cite{Janson01a} proves that the lengths of the blocks and the sums of displacements inside each block are distributed as $(X_1, Y_1)$, ..., $(X_N, Y_N)$ conditioned on $\{ \sum_{i=1}^N X_i = m \}$, where the random vectors $(X_i, Y_i)$ are i.i.d.\ copies of a vector $(X, Y)$ of random variables: $X$ being Borel distributed with any arbitrary parameter $\mu \in \intervalleoo{0}{1}$ and $Y$ given $\{ X = l \}$ being distributed as $d_{l,l-1}$.
In particular, $d_{m, n}$ is distributed as $\sum_{i=1}^N Y_i$ conditioned on $\{ \sum_{i=1}^N X_i = m \}$. If $m = m_n \to \infty$ and $N = N_n = m_n - n \to \infty$ with $n/m_n \to \mu \in \intervalleoo{0}{1}$, we take $\rva{X}{n}$ following Borel distribution with parameter $\mu_n \defeq n/m_n$ to get a Berry-Esseen inequality for $d_{m_n, n}$, by Proposition \ref{cor:cv_distrib}.


\section{Proofs}\label{sec:preuve}

Remind that $U_n$ is distributed as $T_n$ conditioned on $\{S_n=m_n\}$.
Following the procedure of \cite{Janson01}, we consider the projection
\begin{align*}
\rva{Y'}{n} = \rva{Y}{n} -\Espe[\rva{Y}{n}]-\Cov(\rva{X}{n},\rva{Y}{n}) \ssdd{\rva{X}{n}}^{-2} (\rva{X}{n}-\Espe[\rva{X}{n}]).
\end{align*}
Then $\Espe[\rva{Y'}{n}] = 0$ and $\Cov(\rva{X}{n},\rva{Y'}{n}) = \Espe[\rva{X}{n} \rva{Y'}{n}] = 0$. Besides, \ref{ass:fc_XY} and \ref{ass:corr} are verified by $\rva{Y'}{n}$. By \ref{ass:corr},
\[
\ssdd{\rva{Y'}{n}}^2 = \ssdd{\rva{Y}{n}}^2 (1 - r_n^2) \in [\cvarYtilde^2(1 - \ccorr^2), \cvarY^2],
\]
so \ref{ass:var_Y} is satisfied by $\rva{Y'}{n}$. Finally, by Minkowski inequality, \ref{ass:rho_X} and \ref{ass:rho_Y}, and the fact that $\abs{r_n} \leqslant 1$,
\begin{align*}
\norme{\rva{Y'}{n}}_3 & \leqp \norme{\rva{Y}{n}-\Espe[\rva{Y}{n}]}_3 + \abs{r_n} \ssdd{\rva{X}{n}}\ssdd{\rva{Y}{n}}\ssdd{\rva{X}{n}}^{-2} \norme{\rva{X}{n}-\Espe[\rva{X}{n}]}_3 \\
 & \leqp \tm{\rva{Y}{n}}^{1/3} + \ssdd{\rva{Y}{n}} \ssdd{\rva{X}{n}}^{-1} \tm{\rva{X}{n}}^{1/3} \\
  & \leqslant \ssdd{\rva{Y}{n}}(\crhoX^{1/3} + \crhoY^{1/3})\\
  & \leqslant \ssdd{\rva{Y'}{n}}(1-c_6^2)^{-1/2}(\crhoX^{1/3} + \crhoY^{1/3}).
  \end{align*}
Hence, $\rva{Y'}{n}$ satisfies \ref{ass:rho_Y}. Consequently, all conditions hold for the vector  $(\rva{X}{n},\rva{Y'}{n})$ too. Finally,
\[
T'_n \defeq \sum_{i=1}^{N_n} \rva[i]{Y'}{n} = T_n - N_n\Espe[\rva{Y}{n}] - \Cov(\rva{X}{n},\rva{Y}{n}) \ssdd{\rva{X}{n}}^{-2} (S_n-N_n \Espe[\rva{X}{n}]).
\]

So, conditioned on $\{S_n=m_n\}$, we have $T'_n=T_n - N_n \Espe[\rva{Y}{n}] - r_n \ssdd{\rva{Y}{n}} \ssdd{\rva{X}{n}}^{-1} (m_n - N_n\Espe[\rva{X}{n}])$. Hence the conclusions in Theorems \ref{th:BE_cond_strong} and \ref{th:BE_cond_strong_U} for $(\rva{X}{n},\rva{Y}{n})$ and $(\rva{X}{n},\rva{Y'}{n})$ are the same. Thus, it suffices to prove the theorems for $(\rva{X}{n},\rva{Y'}{n})$. In other words, we will henceforth assume that $\Espe\left[\rva{Y}{n}\right]=\Espe\left[\rva{X}{n} \rva{Y}{n}\right]=0$, $r_n=0$ and $\tau_n^2=\ssdd{\rva{Y}{n}}^2$. 
Moreover, the constants $\cvarY'$, $\cvarYtilde'$, $\crhoY'$, $\ccorr'$, and $\cfcX'$
for $(X,Y')$ are linked to that of $(X,Y)$ by the following relations: 
$\cvarY'=\cvarY$, $\cvarYtilde'=\cvarYtilde(1-\ccorr^2)^{1/2}$, $\crhoY'=(1-c_6^2)^{-3/2}(\crhoX^{1/3}+\crhoY^{1/3})^3$, $\ccorr'=0$, and $\cfcX'=\cfcX$. In the proofs, we omit the primes. 

\medskip

The proofs of Theorems \ref{th:BE_cond_strong} and \ref{th:BE_cond_strong_U} intensively rely on the use of Fourier transforms through the functions $\varphi_n$ and $\psi_n$ defined by
\begin{equation} \label{def:psi_n}
\varphi_n(s,t) \defeq \Espe\left[\exp\left\{is\left(\rva{X}{n}-\Espe\left[\rva{X}{n}\right]\right)+it\rva{Y}{n}\right\}\right]
\quad \text{and} \quad
\psi_n(t) \defeq2\pi \Prob(S_n=m_n) \Espe\left[\exp\left\{itU_n\right\}\right] .
\end{equation}
The controls of these functions (respectively the controls of their derivatives) needed in the proofs are postponed to Subection \ref{subsec:tech_res} in Lemmas \ref{lem:bartlett} and \ref{lem:maj_phi_n} (resp.\ in Lemma \ref{lem:maj_der_phi_n}). In particular, \eqref{eq:bartlett}, \eqref{eq:maj_expo_st}, \eqref{eq:maj_der_phi_n}, and \eqref{esp1b_array}
 will be used several times in the sequel.

\subsection{Proof of Theorem \ref{th:BE_cond_strong}}

We follow the classical proof of Berry-Esseen theorem (see e.g.\ \cite{Feller71}) combined with the procedure in \cite{QR82}. 
As shown in \cite{Loeve55} (page 285) or \cite{Feller71}, the left hand side of \eqref{eq:be} is dominated by 
\begin{equation*}
\frac{2}{\pi}\int_{0}^{\eta \bnym} \left|\frac{\psi_n(u\bny)}{2\pi\Prob(S_n=m_n)}-e^{-u^2/2}\right|\frac{du}{u}+\frac{24 \bny}{\eta \pi \sqrt{2\pi}},
\end{equation*}
where $\eta > 0$ is arbitrary. We choose to define
\begin{equation}\label{eq:eta}
\eta \defeq \min \bigg( \frac{2}{9} (\cvarY \crhoY)^{-1}, \eta_0 \bigg)>0.
\end{equation}
From \eqref{eq:bartlett} of Lemma \ref{lem:bartlett} and a Taylor's expansion,
\begin{align*}
&u^{-1}\left|\frac{\psi_n(u\bny)}{2\pi\Prob(S_n=m_n)}-e^{-u^2/2}\right| = u^{-1}e^{-u^2/2}\left|\frac{e^{u^2/2}\psi_n(u\bny)}{2\pi\Prob(S_n=m_n)}-1\right| \\
& \leqp  e^{-u^2/2}  \sup_{0\leqp \theta \leqp u} \left|\frac{\partial}{\partial t}\left[\frac{e^{t^2/2}\psi_n(t\bny)}{2\pi\Prob(S_n=m_n)}\right]_{t=\theta}\right| \\
& \leqp  \cn^{-1}e^{-u^2/2} \sup_{0\leqp \theta \leqp u} \left\{\int_{-\pi \bnxm}^{\pi \bnxm}   \left|\frac{\partial}{\partial t}\left[e^{t^2/2}\varphi_n^{N_n}\left(\frac{s}{\bnxm},\frac{t}{\bnym}\right)\right]_{t=\theta}\right| ds\right\}.
\end{align*}
By \ref{ass:tll}, $\cn\geqslant \ctll$. Now we split the integration domain of $s$ into 
\[
A_1 \defeq \left\{s:\; |s|< \varepsilon \bnxm\right\} \quad \textrm{and} \quad A_2 \defeq \left\{s:\; \varepsilon \bnxm\leqp |s| \leqp \pi \bnxm\right\},
\]
where 
\begin{equation}\label{eq:epsilon}
\varepsilon \defeq \min \bigg( \frac{2}{9} (\cvarX \crhoX)^{-1}, \pi \bigg)
\end{equation}
and decompose 
\begin{equation*}
u^{-1}\left|\frac{\psi_n(u\bny)}{2\pi\Prob(S_n=m_n)}-e^{-u^2/2}\right|\leqp \sup_{0 \leqp \theta\leqp u} \left[I_1(n,u, \theta) + I_2(n,u, \theta)\right],
\end{equation*}
where
\begin{align}
I_1(n, u, \theta)&= \cn^{-1}  \int_{A_1}  e^{-(u^2+s^2)/2} \abs{\frac{\partial}{\partial t} \left[e^{(t^2+s^2)/2}\varphi_n^{N_n}\left(\frac{s}{\bnxm},\frac{t}{\bnym}\right)\right]_{t=\theta}} ds, \label{def:I_1}\\
I_2(n, u, \theta)&= \cn^{-1}  e^{-u^2/2} \int_{A_2} \abs{\frac{\partial}{\partial t}\left[e^{t^2/2}\varphi_n^{N_n}\left(\frac{s}{\bnxm},\frac{t}{\bnym}\right)\right]_{t=\theta}} ds. \label{def:I_2}
\end{align}
Lemmas \ref{lem:I1} and \ref{lem:I2} state that there exists positive constants $C_1$ and $C_2$, only depending on $\cvarXtilde$, $\cvarX$, $\crhoX$, $\crhoY$, $\cfcX$, and $\ctll$, such that, for $N_n \geqslant \max(12^3\crhoX^2, 12^3\crhoY^2, 2)$, 
\begin{equation}\label{eq:I1}
\int_0^{\eta \bnym} \sup_{0\leqp \theta \leqp u} I_1(n, u, \theta) du \leqp \frac{C_1}{N_n^{1/2}},
\end{equation}
and
\begin{equation}\label{eq:I2}
\int_0^{\eta \bnym} \sup_{0\leqp \theta \leqp u} I_2(n, u, \theta)du \leqp \frac{C_2}{N_n^{1/2}}.
\end{equation}
So,
\[
\sup_{x \in \R} \left|\Prob\left(\frac{U_n}{N_n^{1/2}\ssdd{\rva{Y}{n}}}\leqp x \right)-\Phi(x)\right| \leqp \frac{C}{N_n^{1/2}}
\]
with
\begin{equation*} 
C \defeq \max \biggl( C_1 + C_2 +\frac{24}{\widetilde \cvarY \pi \sqrt{2\pi}} \biggl( \min \biggl( \frac{2}{9} \cvarY \crhoY, \eta_0 \biggr)\biggr)^{-1} , 12^{3/2}\crhoX, 12^{3/2}\crhoY, \sqrt{2} \biggr) .
\end{equation*}

\subsection{Proof of Proposition \ref{prop:moment_U}}

\paragraph{Proof of \eqref{eq:moment}}

 We adapt the proof given in \cite{Janson01}. Using the definition \eqref{def:psi_n} of $\Psi_n$, and differentiating under the integral sign of \eqref{eq:bartlett} of Lemma \ref{lem:bartlett}, we naturally have
\begin{align*}
 &\abs{\Espe\left[U_n\right]} = \abs{\frac{-i\psi_n'(0)}{2\pi \Prob(S_n=m_n)}} \nonumber\\
 & \leqslant  \cn^{-1}N_n \int_{-\pi \bnxm}^{\pi\bnxm} \abs{\frac{\partial \varphi_n}{\partial t}\left(\frac{s}{\bnxm},0\right)} \cdot \abs{\varphi_n^{N_n-1}\left(\frac{s}{\bnxm},0\right)} ds.
\end{align*}
Using \eqref{esp1b_array} of Lemma \ref{lem:maj_der_phi_n} with $t = 0$, \ref{ass:var_X}, \ref{ass:rho_X}, and \ref{ass:rho_Y}, we deduce
\[
\abs{\frac{\partial \varphi_n}{\partial t}\left(\frac{s}{\bnxm},0\right)} \leqp \frac{s^2}{2}\frac{\tm{\rva{Y}{n}}^{1/3}\tm{\rva{X}{n}}^{2/3}}{\ssdd{\rva{X}{n}}^2 N_n} \leqp \frac{\crhoX^{2/3} \cvarY \crhoY^{1/3}}{2 N_n} s^2.
\]
Then using \eqref{eq:maj_expo_st} of Lemma \ref{lem:maj_phi_n} (with $l=1$ and $t = 0$) and for $N_n \geqslant 3$,
\[
\int_{-\pi \bnxm}^{\pi\bnxm} \abs{\frac{\partial \varphi_n}{\partial t}\left(\frac{s}{\bnxm},0\right)} \cdot \abs{\varphi_n^{N_n-1}\left(\frac{s}{\bnxm},0\right)} ds
\leqp \frac{\crhoX^{2/3} \cvarY \crhoY^{1/3}}{2 N_n} \int_{-\infty}^{+\infty} s^2 e^{-2 \cfcX s^2/3} ds.
\]
So, \eqref{eq:moment} holds with
\[
\cexpU \defeq 2^{-1} \crhoX^{2/3} \cvarY \crhoY^{1/3} \ctll^{-1} \int_{-\infty}^{+\infty} s^2 e^{-2 \cfcX s^2/3} ds .
\]
 
\paragraph{Proof of \eqref{eq:moment2}}

Since $\tau_n^2 = \ssdd{\rva{Y}{n}}^2$ and $\Espe\left[U_n\right]$ is bounded, it suffices to show that the quantity $\abs{\Espe\left[U_n^2\right] - N_n \ssdd{\rva{Y}{n}}^2}$ is bounded by some $\cvarUprime N_n^{1/2}$ to prove \eqref{eq:moment2}. Proceeding as done previously,
\begin{align}
 & \Espe\left[U_n^2\right] = \frac{-\psi_n''(0)}{2\pi \Prob(S_n=m_n)}\nonumber\\
 & = - \cn^{-1} N_n(N_n-1)  \int_{-\pi \bnxm}^{\pi\bnxm}
 e^{-isv_n} \left(\frac{\partial \varphi_n}{\partial t}\left(\frac{s}{\bnxm},0\right)\right)^2\varphi_n^{N_n-2}\left(\frac{s}{\bnxm},0\right)ds\label{eq:moment3}\\
 & \quad - \cn^{-1} N_n  \int_{-\pi \bnxm}^{\pi\bnxm}  e^{-isv_n} \frac{\partial^2 \varphi_n}{\partial t^2}\left(\frac{s}{\bnxm},0\right)\varphi_n^{N_n-1}\left(\frac{s}{\bnxm},0\right)ds\label{eq:moment4}
\end{align}
where 
\begin{equation}\label{def:vn}
v_n \defeq (m_n - {N_n}\Espe\left[\rva{X}{n}\right])/(\ssdd{\rva{X}{n}} N_n^{1/2}).
\end{equation}
First, by \eqref{esp1b_array} of Lemma \ref{lem:maj_der_phi_n} with $t = 0$ and by \eqref{eq:maj_expo_st} of Lemma \ref{lem:maj_phi_n} (with $l=2$ and $t = 0$), one has, for $N_n \geqslant 3$,
\begin{align*}
 & \int_{-\pi \bnxm}^{\pi\bnxm} \abs{\frac{\partial \varphi_n}{\partial t}\left(\frac{s}{\bnxm},0\right)}^2 \abs{\varphi_n^{N_n-2}\left(\frac{s}{\bnxm},0\right)} ds \\
 & \qquad \leqp \frac{\crhoX^{4/3} \cvarY^2 \crhoY^{2/3}}{4 N_n^2} \int_{-\infty}^{+\infty} s^4 e^{- \cfcX s^2/3}ds.
\end{align*}
Finally, by \ref{ass:tll}, the term \eqref{eq:moment3} is bounded by
\begin{equation*}
\cvarUseconde \defeq \frac{\crhoX^{4/3} \cvarY^2 \crhoY^{2/3}}{4 \ctll} \int_{-\infty}^{+\infty} s^4 e^{- \cfcX s^2/3}ds.
\end{equation*}

Second, we study the term \eqref{eq:moment4}. We want to show that 
\[
\Delta_n: =  \cn^{-1} \int_{-\pi \bnxm}^{\pi\bnxm} e^{-isv_n} \frac{\partial^2 \varphi_n}{\partial t^2}\left(\frac{s}{\bnxm}, 0\right) \varphi_n^{N_n-1}\left(\frac{s}{\bnxm}, 0\right)ds + \ssdd{\rva{Y}{n}}^2
\]
is bounded by some $\cvarUtierce/N_n^{1/2}$. By \eqref{eq:bartlett} with $t=0$,
\[
\int_{-\pi \bnxm}^{\pi\bnxm} e^{-isv_n} \varphi_n^{N_n}\left(\frac{s}{\bnxm},0\right) ds = 2 \pi \Prob(S_n = m_n) \ssdd{\rva{X}{n}} N_n^{1/2} = \cn,
\]
so
\begin{align*}
\Delta_n & = \cn^{-1} \int_{-\pi \bnxm}^{\pi\bnxm} e^{-isv_n}  \left( \frac{\partial^2 \varphi_n}{\partial t^2}\left(\frac{s}{\bnxm},0\right) + \ssdd{\rva{Y}{n}}^2 \varphi_n\left(\frac{s}{\bnxm},0\right)\right) 
 \varphi_n^{N_n-1} \left(\frac{s}{\bnxm},0\right)ds \\
 &=  \cn^{-1} \int_{-\pi \bnxm}^{\pi\bnxm} e^{-isv_n} \Espe\bigg[ {\rva{Y}{n}}^2 f(s) \bigg] \cdot\varphi_n^{N_n-1} \left(\frac{s}{\bnxm},0\right)ds
\end{align*}
where \[
f(s) = -  \left(e^{is\bnx(\rva{X}{n}-\Espe[\rva{X}{n}])} -\Espe\Big[ e^{is\bnx(\rva{X}{n}-\Espe[\rva{X}{n}])} \Big]\right).
\]
Applying Taylor's theorem yields
\begin{align*}
\abs{f(s)} & \leqslant \abs{s} \sup_u \left\lvert - i\frac{\rva{X}{n}-\Espe[\rva{X}{n}]}{\bnxm} e^{iu\bnx(\rva{X}{n}-\Espe[\rva{X}{n}])} \right. 
\left. 
+ \Espe\bigg[ i\frac{\rva{X}{n}-\Espe[\rva{X}{n}]}{\bnxm} e^{iu\bnx(\rva{X}{n}-\Espe[\rva{X}{n}])} \bigg]\right\rvert \\
 & \leqslant \frac{\abs{s}}{N_n^{1/2}} \bigg( \abs{ \frac{\rva{X}{n}-\Espe[\rva{X}{n}]}{\ssdd{\rva{X}{n}}}} + \Espe\bigg[ \abs{ \frac{\rva{X}{n}-\Espe[\rva{X}{n}]}{\ssdd{\rva{X}{n}}}} \bigg] \bigg).
\end{align*}
Thus, using H\"older's inequality,
\begin{align*}
\abs{\Espe[{\rva{Y}{n}}^2 f(s)]} & \leqslant \frac{\abs{s}}{N_n^{1/2}} \Espe\bigg[ {\rva{Y}{n}}^2 \bigg( \abs{\frac{\rva{X}{n}-\Espe[\rva{X}{n}]}{\ssdd{\rva{X}{n}}}} + \Espe\bigg[ \abs{\frac{\rva{X}{n}-\Espe[\rva{X}{n}]}{\ssdd{\rva{X}{n}}}} \bigg] \bigg) \bigg] \\
 & \leqslant \frac{\ssdd{\rva{Y}{n}}^2 \abs{s}}{N_n^{1/2}} \bigg( \frac{\tm{\rva{Y}{n}}^{2/3}}{\ssdd{\rva{Y}{n}}^2} \frac{\tm{\rva{X}{n}}^{1/3}}{\ssdd{\rva{X}{n}}} + 1 \bigg)
\\
& \leqslant \frac{\abs{s} \cvarY^2}{N_n^{1/2}} \bigg(\crhoY^{2/3}\crhoX^{1/3}+1 \bigg) 
\end{align*}
where the last inequality is obtained using \ref{ass:var_X}, \ref{ass:rho_X}, \ref{ass:var_Y}, and \ref{ass:rho_Y}. Now, 
by \ref{ass:tll} and the upper bound in \eqref{eq:maj_expo_st} (with $l=1$ and $t = 0$), we get, for $N_n\geqslant 3$,
\[
\abs{\Delta_n} \leqslant \frac{\cvarY^2}{\ctll N_n^{1/2}} \bigg(\crhoY^{2/3}\crhoX^{1/3}+1 \bigg)  \int_{-\infty}^{+\infty} \abs{s} e^{-s^2 \cfcX (N_n-1)/N_n} ds \leqslant \frac{\cvarUtierce}{N_n^{1/2}},
\]
with
\begin{equation*}
\cvarUtierce \defeq \cvarY^2 \ctll^{-1}( \crhoY^{2/3} \crhoX^{1/3} +1) \int_{-\infty}^{+\infty} \abs{s} e^{-2 s^2 \cfcX/3} ds.
\end{equation*}
Finally, 
\[
\abs{\Var(U_n) - N_n \ssdd{\rva{Y}{n}}^2} \leqslant (\cexpU^2 + \cvarUseconde + \cvarUtierce) N_n^{1/2} \eqdef \cvarU N_n^{1/2} .
\]

Then the proof of \eqref{eq:moment2} is complete.

\subsection{Proof of Theorem \ref{th:BE_cond_strong_U}}

Write 
\begin{align*}
\abs{\Prob\left( \frac{U_n - \Espe[U_n]}{\Var\left(U_n\right)^{1/2}} \leqslant x \right) - \Phi(x)}
 & \leqslant \abs{\Prob\left( \frac{U_n}{N_n^{1/2} \ssdd{\rva{Y}{n}}} \leqp a_n x + b_n \right) - \Phi(a_n x + b_n)} \\
 & \hspace{5cm} + \abs{\Phi(a_n x + b_n) - \Phi(x)} \\
\end{align*}
where
\[
a_n \defeq \frac{\Var(U_n)^{1/2}}{N_n^{1/2} \ssdd{\rva{Y}{n}}} \quad \text{and} \quad b_n \defeq \frac{\Espe[U_n]}{N_n^{1/2} \ssdd{\rva{Y}{n}}}.
\]
The previous estimates of $\Espe[U_n]$ and $\Var(U_n)$ yield, 
\[
\abs{a_n - 1} \leqslant \abs{a_n^2 - 1} \leqslant \cvarU \cvarYtilde^{-2} N_n^{-1/2} \quad \text{and} \quad \abs{b_n} \leqslant \cexpU \cvarYtilde^{-1} N_n^{-1/2}.
\]
Then for $N_n^{1/2} \geqslant 2 \cvarYtilde^{-2} \cvarU$, $a_n \geqslant 1/2$ and applying Taylor's theorem to $\Phi$, one gets
\begin{align*}
\abs{\Phi(a_n x + b_n) - \Phi(x)} & \leqslant \abs{(a_n - 1) x + b_n} \sup_t \frac{e^{-t^2/2}}{\sqrt{2\pi}} \\
 & \leqslant \frac{N_n^{-1/2}}{\sqrt{2\pi}} \max(\cvarU \cvarYtilde^{-2}, \cexpU \cvarYtilde^{-1}) (\abs{x} + 1) e^{-(|x|/2 - \cexpU \cvarYtilde^{-1})^2/2}\\
\end{align*}
the supremum being over $t$ between $x$ and $a_n x + b_n$.  The last function in $x$ being bounded, we can define 
\[
C'\defeq \frac{1}{\sqrt{2\pi}}  \max(\cvarU \cvarYtilde^{-2}, \cexpU \cvarYtilde^{-1} ) \sup_{x \in \R} \Big[(\abs{x} + 1) e^{-(|x|/2 - \cexpU \cvarYtilde^{-1})^2/2} \Big] 
.
\] 
Finally, we apply \eqref{eq:be} and \eqref{eq:be_2} holds with $\widetilde{C}\defeq C+\max(C', 2\cvarYtilde^{-2} \cvarU)$.

\subsection{Technical results}\label{subsec:tech_res}

Recall that $v_n = (m_n - {N_n}\Espe\left[\rva{X}{n}\right])/(\ssdd{\rva{X}{n}} N_n^{1/2})$ and $\cn = 2\pi\Prob(S_n=m_n) \bnxm$. Moreover,
\begin{equation*}
\varphi_n(s,t) = \Espe\left[\exp\left\{is\left(\rva{X}{n}-\Espe\left[\rva{X}{n}\right]\right)+it \rva{Y}{n}\right\}\right]
\quad \text{and} \quad
\psi_n(t) = 2\pi \Prob(S_n=m_n) \Espe\left[\exp\left\{itU_n\right\}\right] .
\end{equation*}

\begin{lemma}\label{lem:bartlett} One has
\begin{equation}\label{eq:bartlett}
\psi_n(t)=\frac{1}{\ssdd{\rva{X}{n}} N_n^{1/2}} \int_{-\pi \ssdd{\rva{X}{n}} N_n^{1/2}}^{\pi \ssdd{\rva{X}{n}} N_n^{1/2}} e^{-isv_n} \varphi_n^{N_n}\left(\frac{s}{\bnxm},t\right) ds
\end{equation}
\end{lemma}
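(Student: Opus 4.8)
The plan is to establish the Fourier inversion identity \eqref{eq:bartlett} directly from the definitions of $\psi_n$ and $\varphi_n$, using the integer-valued nature of $S_n$ to recover the conditional law $\Loi(T_n\mid S_n=m_n)$ via a discrete Fourier transform in the first variable. First I would note that, since the pairs $(\rva[i]{X}{n},\rva[i]{Y}{n})$ are i.i.d., the joint characteristic function of $(S_n-N_n\Espe[\rva{X}{n}],T_n)$ factorizes as
\[
\Espe\bigl[e^{i\sigma(S_n-N_n\Espe[\rva{X}{n}])+i t T_n}\bigr]=\varphi_n(\sigma,t)^{N_n}.
\]
Because $\rva{X}{n}$ is integer-valued, so is $S_n$, and the standard orthogonality relation $\frac{1}{2\pi}\int_{-\pi}^{\pi}e^{i\sigma(S_n-m_n)}\,d\sigma=\indic_{\{S_n=m_n\}}$ holds pointwise. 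Multiplying the factorized characteristic function above by $e^{-i\sigma(m_n-N_n\Espe[\rva{X}{n}])}$ and integrating $\sigma$ over $[-\pi,\pi]$ therefore yields, by Fubini (justified since everything is bounded and the integration domain is compact),
\[
\frac{1}{2\pi}\int_{-\pi}^{\pi}e^{-i\sigma(m_n-N_n\Espe[\rva{X}{n}])}\varphi_n(\sigma,t)^{N_n}\,d\sigma=\Espe\bigl[e^{itT_n}\indic_{\{S_n=m_n\}}\bigr]=\Prob(S_n=m_n)\,\Espe\bigl[e^{itU_n}\bigr].
\]

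Next I would multiply both sides by $2\pi$ to recognize the right-hand side as $\psi_n(t)$, obtaining
\[
\psi_n(t)=\int_{-\pi}^{\pi}e^{-i\sigma(m_n-N_n\Espe[\rva{X}{n}])}\varphi_n(\sigma,t)^{N_n}\,d\sigma.
\]
Finally I would perform the change of variables $\sigma=s/(\sd{\rva{X}{n}}N_n^{1/2})=s/\bnxm$, so that $d\sigma=ds/\bnxm$, the limits become $\pm\pi\bnxm=\pm\pi\sd{\rva{X}{n}}N_n^{1/2}$, and the exponent becomes $-is(m_n-N_n\Espe[\rva{X}{n}])/(\sd{\rva{X}{n}}N_n^{1/2})=-isv_n$ by the definition \eqref{def:vn} of $v_n$. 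This gives exactly \eqref{eq:bartlett}.

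There is no real obstacle here: the only points requiring a word of care are the interchange of expectation and integral (immediate by boundedness on a compact set) and the appearance of the centering $N_n\Espe[\rva{X}{n}]$, which is handled by writing $\varphi_n$ in terms of the centered variable $\rva{X}{n}-\Espe[\rva{X}{n}]$ from the outset and absorbing the shift into the exponential factor $e^{-i\sigma(m_n-N_n\Espe[\rva{X}{n}])}$. This identity is the Bartlett-type inversion formula underlying the local limit / Berry–Esseen analysis, and all subsequent lemmas differentiate or bound this representation.
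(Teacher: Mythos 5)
Your proof is correct and follows essentially the same route as the paper's: the orthogonality relation $\frac{1}{2\pi}\int_{-\pi}^{\pi}e^{i\sigma(S_n-m_n)}\,d\sigma=\indic_{\{S_n=m_n\}}$, factorization of the joint characteristic function by independence, Fubini on the compact domain, and the change of variables $s=\sigma\,\sd{\rva{X}{n}}N_n^{1/2}$. The only cosmetic difference is that you state the interchange of integral and expectation and the handling of the centering explicitly, which the paper leaves implicit.
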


\begin{proof}
Indeed, since
\[
\int_{-\pi}^\pi e^{is(S_n - m_n)} ds = 2 \pi \indic_{\{ S_n = m_n \}},
\]
we have
\begin{align*}
\psi_n(t) & = 2\pi \Prob(S_n=m_n) \Espe\left[\exp\left\{itU_n\right\}\right] \\
 & = 2\pi \Espe\left[ \exp\left\{it T_n \right\} \indic_{\{S_n = m_n\}} \right] \\
 & = \int_{-\pi}^{\pi} \Espe\left[\exp\left\{is \left(S_n -m_n\right)+it  T_n \right\} \right]ds \\
 & = \int_{-\pi}^{\pi} e^{-is\left(m_n-N_n\Espe\left[\rva{X}{n}\right]\right)}\varphi_n^{N_n}(s,t)ds,
\end{align*}
which leads to \eqref{eq:bartlett}
after the change of variable $s'=s\ssdd{\rva{X}{n}} N_n^{1/2}$.
\end{proof}

Now we give controls on the function $\varphi_n$ and its partial derivatives (see Lemmas \ref{lem:maj_phi_n} and \ref{lem:maj_der_phi_n}).

\begin{lemma} \label{lem:maj_phi_n}
Under \ref{ass:fc_XY}, for any integer $l \geqslant 0$, $\abs{s} \leqslant \pi \ssdd{\rva{X}{n}} N_n^{1/2}$, and $\abs{t}  \leqslant \eta_0 \ssdd{\rva{Y}{n}} N_n^{1/2}$, one gets
\begin{equation} \label{eq:maj_expo_st}
\abs{\varphi_n^{N_n-l} \bigg(\frac{s}{\bnxm}, \frac{t}{\bnym} \bigg)} \leqslant e^{-(s^2+t^2) \cdot \cfcX \cdot (N_n - l)/N_n}.
\end{equation}
\end{lemma}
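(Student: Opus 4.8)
\textbf{Proof plan for Lemma \ref{lem:maj_phi_n}.}
The plan is to bound the modulus of a single factor $|\varphi_n(s/\bnxm, t/\bnym)|$ using Assumption \ref{ass:fc_XY}, and then raise it to the power $N_n-l$. First I would note that $\varphi_n(s,t) = \Espe[e^{i(s(\rva{X}{n}-\Espe[\rva{X}{n}]) + t\rva{Y}{n})}]$ differs from $\Espe[e^{i(s\rva{X}{n} + t\rva{Y'}{n})}]$ only through phase factors (a deterministic shift by $-s\Espe[\rva{X}{n}]$, and the fact that after the reduction in Section \ref{sec:preuve} we may assume $\rva{Y}{n}$ is centered and uncorrelated with $\rva{X}{n}$, i.e.\ $\rva{Y'}{n} = \rva{Y}{n}$); since multiplying by a unit-modulus deterministic factor does not change the modulus, \ref{ass:fc_XY} gives directly, for $|s| \leqp \pi$ and $|t| \leqp \eta_0$,
\[
\abs{\varphi_n(s,t)} \leqp 1 - \cfcX\bigl(\sd{\rva{X}{n}}^2 s^2 + \sd{\rva{Y}{n}}^2 t^2\bigr).
\]

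Next I would substitute $s \mapsto s/\bnxm = s/(\sd{\rva{X}{n}} N_n^{1/2})$ and $t \mapsto t/\bnym = t/(\sd{\rva{Y}{n}} N_n^{1/2})$. The hypothesis $|s| \leqp \pi \sd{\rva{X}{n}} N_n^{1/2}$ and $|t| \leqp \eta_0 \sd{\rva{Y}{n}} N_n^{1/2}$ ensures that the rescaled arguments lie in $\intervalleff{-\pi}{\pi}$ and $\intervalleff{-\eta_0}{\eta_0}$ respectively, so the bound above applies and the variance factors cancel cleanly:
\[
\abs{\varphi_n\bigg(\frac{s}{\bnxm}, \frac{t}{\bnym}\bigg)} \leqp 1 - \cfcX \frac{s^2 + t^2}{N_n}.
\]
Then I would use the elementary inequality $1 - x \leqp e^{-x}$ valid for all real $x$ (here $x = \cfcX(s^2+t^2)/N_n \geqp 0$), so the right-hand side is at most $e^{-\cfcX(s^2+t^2)/N_n}$. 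Raising both sides to the power $N_n - l$ — which is legitimate and preserves the inequality as long as $N_n - l \geqp 0$, and otherwise the statement is vacuous or trivial — yields exactly \eqref{eq:maj_expo_st}.

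There is essentially no serious obstacle here: the only point requiring a little care is the bookkeeping around the reduction to the centered, uncorrelated case, making sure that the $\varphi_n$ appearing in the lemma (defined with $\rva{X}{n} - \Espe[\rva{X}{n}]$ in the exponent) is the one to which \ref{ass:fc_XY} applies after the substitution $\rva{Y}{n} \leftarrow \rva{Y'}{n}$ explained in Section \ref{sec:preuve}; since we work under that convention throughout the proofs, and modulus is insensitive to the deterministic phase $e^{-is\Espe[\rva{X}{n}]}$, this is immediate. The rest is the substitution and the bound $1-x \leqp e^{-x}$.
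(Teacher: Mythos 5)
Your proof is correct and follows exactly the route the paper intends: the paper's own proof is the single line "a mere consequence of $1+x\leqslant e^x$", and your write-up is simply the explicit expansion of that line — apply \ref{ass:fc_XY} to a single factor (noting the deterministic phase and the reduction to $\rva{Y'}{n}$ leave the modulus unchanged), rescale the arguments, use $1-x\leqslant e^{-x}$, and raise to the power $N_n-l$. Nothing further is needed.
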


\begin{proof}
The proof is a mere consequence of the inequality $1 + x \leqslant e^x$ that holds for any $x\in\R$.
\end{proof}

\begin{lemma} \label{lem:maj_der_phi_n}
For any $s$ and $t$, one has
\begin{equation} \label{eq:maj_der_phi_n}
\left|\frac{\partial \varphi_n}{\partial t} \left(\frac{s}{\bnxm},\frac{t}{\bnym}\right)\right| \leqp \frac{\ssdd{\rva{Y}{n}}}{N_n^{1/2}} (|s|+|t|) ;
\end{equation}
and 
\begin{align}\label{esp1b_array}
\left\lvert \frac{\partial \varphi_n}{\partial t} \left(\frac{s}{\ssdd{\rva{X}{n}} N_n^{1/2}}, \frac{t}{\ssdd{\rva{Y}{n}} N_n^{1/2}} \right)\right\rvert & \leqslant \frac{\ssdd{\rva{Y}{n}}}{N_n^{1/2}}  |t| + \frac{\ssdd{\rva{Y}{n}}}{N_n} \bigg[ \frac{s^2}{2} \bigg( \frac{\tm{\rva{X}{n}}}{\ssdd{\rva{X}{n}}^3}\bigg)^{2/3} \bigg( \frac{\tm{\rva{Y}{n}}}{\ssdd{\rva{Y}{n}}^3}\bigg)^{1/3} \nonumber \\
 &  + \abs{st} \bigg( \frac{\tm{\rva{X}{n}}}{\ssdd{\rva{X}{n}}^3}\bigg)^{1/3} \bigg( \frac{\tm{\rva{Y}{n}}}{\ssdd{\rva{Y}{n}}^3}\bigg)^{2/3} + \frac{t^2}{2} \bigg( \frac{\tm{\rva{Y}{n}}}{\ssdd{\rva{Y}{n}}^3}\bigg) \bigg].
\end{align}
\end{lemma}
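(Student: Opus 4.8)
The plan is to differentiate $\varphi_n$ under the expectation and then combine elementary Taylor estimates for the complex exponential with the moment conditions $\Espe[\rva{Y}{n}] = \Espe[\rva{X}{n}\rva{Y}{n}] = 0$ available after the reduction made at the start of this section. Since $\sd{\rva{Y}{n}} < \infty$, the variable $\rva{Y}{n}$ is integrable, so dominated convergence justifies
\[
\frac{\partial \varphi_n}{\partial t}(\sigma, \tau) = i\,\Espe\Bigl[\rva{Y}{n}\,e^{i\sigma(\rva{X}{n} - \Espe[\rva{X}{n}]) + i\tau\rva{Y}{n}}\Bigr]
\]
for all real $\sigma, \tau$. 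Specialising to $\sigma = s/\bnxm$, $\tau = t/\bnym$ and abbreviating $Z \defeq \sigma(\rva{X}{n} - \Espe[\rva{X}{n}]) + \tau\rva{Y}{n}$, the quantity to be bounded is $\abs{\Espe[\rva{Y}{n}\,e^{iZ}]}$.

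For \eqref{eq:maj_der_phi_n}, I would use only $\Espe[\rva{Y}{n}] = 0$, write $\Espe[\rva{Y}{n}\,e^{iZ}] = \Espe[\rva{Y}{n}(e^{iZ} - 1)]$, and bound $\abs{e^{iZ} - 1} \leqp \abs{Z} \leqp \abs{\sigma}\abs{\rva{X}{n} - \Espe[\rva{X}{n}]} + \abs{\tau}\abs{\rva{Y}{n}}$. Cauchy--Schwarz (using $\Espe[\rva{Y}{n}^2] = \sd{\rva{Y}{n}}^2$) then gives $\abs{\Espe[\rva{Y}{n}\,e^{iZ}]} \leqp \abs{\sigma}\sd{\rva{X}{n}}\sd{\rva{Y}{n}} + \abs{\tau}\sd{\rva{Y}{n}}^2$, and substituting the values of $\sigma$ and $\tau$ produces the right-hand side $\frac{\sd{\rva{Y}{n}}}{N_n^{1/2}}(\abs{s} + \abs{t})$.

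For the sharper estimate \eqref{esp1b_array}, I would push the Taylor expansion one order further, using $\abs{e^{iZ} - 1 - iZ} \leqp Z^2/2$ together with the uncorrelatedness $\Espe[\rva{Y}{n}(\rva{X}{n} - \Espe[\rva{X}{n}])] = 0$, which gives $\Espe[\rva{Y}{n}Z] = \tau\sd{\rva{Y}{n}}^2$ and hence
\[
\abs{\Espe[\rva{Y}{n}\,e^{iZ}]} \leqp \abs{\tau}\sd{\rva{Y}{n}}^2 + \tfrac{1}{2}\Espe\bigl[\abs{\rva{Y}{n}}\,Z^2\bigr].
\]
The first term, after substituting $\tau = t/\bnym$, is exactly the leading term $\frac{\sd{\rva{Y}{n}}}{N_n^{1/2}}\abs{t}$ of \eqref{esp1b_array}. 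For the second, expanding $Z^2 \leqp \sigma^2(\rva{X}{n} - \Espe[\rva{X}{n}])^2 + 2\abs{\sigma\tau}\abs{\rva{X}{n} - \Espe[\rva{X}{n}]}\abs{\rva{Y}{n}} + \tau^2\rva{Y}{n}^2$ and applying Hölder's inequality with conjugate exponents $(3, 3/2)$ to the first expectation and $(3/2, 3)$ to the second bounds them by $\tm{\rva{X}{n}}^{2/3}\tm{\rva{Y}{n}}^{1/3}$ and $\tm{\rva{X}{n}}^{1/3}\tm{\rva{Y}{n}}^{2/3}$, while the third is $\tm{\rva{Y}{n}}$ outright. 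Substituting $\sigma = s/\bnxm$ and $\tau = t/\bnym$ and regrouping the powers of $\sd{\rva{X}{n}}$, $\sd{\rva{Y}{n}}$, and $N_n$ reproduces the three bracketed terms of \eqref{esp1b_array}. There is no real obstacle here; the only points demanding care are the justification of differentiation under the expectation, the choice of Hölder exponents so that the moments appear with the weights $(\tm{\rva{X}{n}}/\sd{\rva{X}{n}}^3)^{2/3}(\tm{\rva{Y}{n}}/\sd{\rva{Y}{n}}^3)^{1/3}$ and the analogous weight with those exponents interchanged, and the bookkeeping of the normalising constants at the evaluation point.
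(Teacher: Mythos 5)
Your proposal is correct and follows essentially the same route as the paper: a first-order (resp.\ second-order) Taylor expansion whose constant and linear terms vanish or simplify thanks to $\Espe[\rva{Y}{n}]=\Espe[(\rva{X}{n}-\Espe[\rva{X}{n}])\rva{Y}{n}]=0$, with the remainder controlled by mixed third moments via H\"older's inequality with exponents $(3,3/2)$. The only cosmetic difference is that you Taylor-expand $e^{iZ}$ inside the expectation, whereas the paper Taylor-expands the composed function $(s,t)\mapsto\frac{\partial\varphi_n}{\partial t}\bigl(s/\bnxm,t/\bnym\bigr)$ and bounds its partial derivatives; the resulting estimates are identical.
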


\begin{proof}
We apply Taylor's theorem to the function defined by
\[
(s,t) \mapsto f(s,t)=\frac{\partial \varphi_n}{\partial t} \left(\frac{s}{\bnxm},\frac{t}{\bnym}\right).
\]
We conclude to \eqref{eq:maj_der_phi_n} using
\[
\left|f(s,t)-f(0,0)\right|\leqp |s|\sup_{\theta, \theta' \in \intervalleff{0}{1}} \left|\frac{\partial f}{\partial s}\left(\theta s,\theta' t\right)\right|+|t|\sup_{\theta, \theta' \in \intervalleff{0}{1}} \left|\frac{\partial f}{\partial t}\left(\theta s,\theta' t\right)\right|
\]
and to \eqref{esp1b_array} using
\begin{align*}
\left|f(s,t)-f(0,0)\right|
 & \leqp |s| \left|\frac{\partial f}{\partial s}\left(0,0\right)\right|+|t| \left|\frac{\partial f}{\partial t}\left(0,0\right)\right| + \frac{s^2}{2}\sup_{\theta, \theta' \in \intervalleff{0}{1}} \left|\frac{\partial^2 f}{\partial^2 s}\left(\theta s,\theta' t\right)\right|\\
 & \qquad + |st|\sup_{\theta, \theta' \in \intervalleff{0}{1}} \left|\frac{\partial^2 f}{\partial t\partial s}\left(\theta s,\theta' t\right)\right|+\frac{t^2}{2}\sup_{\theta, \theta' \in \intervalleff{0}{1}} \left|\frac{\partial^2 f}{\partial^2 t}\left(\theta s,\theta' t\right)\right|.
\end{align*}
The partial derivatives of $f$ are estimated by mixed moments of $X_n$ and $Y_n$ and then bounded above by Hölder's inequality. 
\end{proof}

The following lemma is a result due to Quine and Robinson (\cite[Lemma 2]{QR82}).

\begin{lemma}\label{lem:lem2}
Define
\[
l_{1,n} \defeq \tm{\rva{X}{n}} \ssdd{\rva{X}{n}}^{-3} N_n^{-1/2} \qquad \text{and} \qquad l_{2,n} \defeq \tm{\rva{Y}{n}} \ssdd{\rva{Y}{n}}^{-3} N_n^{-1/2}.
\]
If $l_{1,n} \leqslant 12^{-3/2}$ and $l_{2,n} \leqslant 12^{-3/2}$, then, for all
\[
(s, t) \in R \defeq \left\{(s,t):\; |s|<\frac{2}{9}l_{1,n}^{-1}, |t|<\frac{2}{9}l_{2,n}^{-1}\right\},
\]
we have
\begin{align*}
\left\lvert \frac{\partial}{\partial t}\bigg[e^{(s^2+t^2)/2} \right.
 & \left. \varphi_n^{N_n}\left(\frac{s}{\bnxm},\frac{t}{\bnym}\right)\bigg]\right\rvert 
 \leqp \clemdeux(|s|+|t|+1)^3(l_{1,n}+l_{2,n})\exp\left\{\frac{11}{24}\left(s^2+t^2\right)\right\},
\end{align*}
with $\clemdeux \defeq 161$.
\end{lemma}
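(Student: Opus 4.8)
The statement is a bivariate Berry--Esseen estimate, and I would reproduce the argument of \cite[Lemma 2]{QR82}: expand $\varphi_n$ to second order in its second variable, raise to the power $N_n$, compare with the Gaussian factor, and differentiate once in $t$. Throughout I use the reduced setting of the proofs, so that $\Espe[\rva{Y}{n}]=\Espe[\rva{X}{n}\rva{Y}{n}]=0$ and all first-order cross terms below vanish; write $\varphi_n'$ for the partial derivative of $\varphi_n$ in its second argument (which exists and is continuous since $\rva{Y}{n}$ has a finite third moment). Two elementary facts will be used repeatedly. By Lyapunov's inequality $\sd{\rva{X}{n}}^3\leqp\tm{\rva{X}{n}}$ and $\sd{\rva{Y}{n}}^3\leqp\tm{\rva{Y}{n}}$, so $l_{1,n},l_{2,n}\geqp N_n^{-1/2}$, whence $N_n^{-1}\leqp\min(l_{1,n},l_{2,n})$ and $N_n^{-1/3}\leqp l_{i,n}^{2/3}$; consequently, abbreviating $u=\tfrac{s}{\bnxm}$ and $v=\tfrac{t}{\bnym}$, one has $\sd{\rva{X}{n}}^2u^2+\sd{\rva{Y}{n}}^2v^2=(s^2+t^2)/N_n$, $|u|\,\norme{\rva{X}{n}-\Espe[\rva{X}{n}]}_3\leqp|s|l_{1,n}$ and $|v|\,\norme{\rva{Y}{n}}_3\leqp|t|l_{2,n}$, and these three quantities are $<8/81$, $<2/9$, $<2/9$ throughout $R$; the hypotheses $l_{i,n}\leqp 12^{-3/2}$ make these smallness bounds strong enough for what follows.

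\textbf{Step 1 (Taylor expansions).} From $\abs{e^{ix}-1-ix+x^2/2}\leqp\abs{x}^3/6$ and Minkowski's inequality,
\[
\varphi_n(u,v)=1-\tfrac12\bigl(\sd{\rva{X}{n}}^2u^2+\sd{\rva{Y}{n}}^2v^2\bigr)+r(u,v),\qquad \abs{r(u,v)}\leqp\tfrac16\bigl(|u|\,\norme{\rva{X}{n}-\Espe[\rva{X}{n}]}_3+|v|\,\norme{\rva{Y}{n}}_3\bigr)^3,
\]
and, differentiating under the expectation and using $\abs{e^{ix}-1-ix}\leqp x^2/2$ together with Hölder's inequality,
\[
\varphi_n'(u,v)=-\sd{\rva{Y}{n}}^2 v+e(u,v),\qquad \abs{e(u,v)}\leqp\tfrac12\,\norme{\rva{Y}{n}}_3\bigl(|u|\,\norme{\rva{X}{n}-\Espe[\rva{X}{n}]}_3+|v|\,\norme{\rva{Y}{n}}_3\bigr)^2,
\]
the first-order cross term vanishing precisely because $\Espe[(\rva{X}{n}-\Espe[\rva{X}{n}])\rva{Y}{n}]=0$. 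Putting $u=\tfrac{s}{\bnxm}$, $v=\tfrac{t}{\bnym}$ and using the two facts above, on $R$ I obtain $\abs{r}\leqp\tfrac16(4/9)^2(|s|l_{1,n}+|t|l_{2,n})$ (one cubic power being absorbed via $|s|l_{1,n},|t|l_{2,n}<2/9$) and $\tfrac{N_n}{\bnym}\abs{e}\leqp\tfrac12\,l_{2,n}^{1/3}(|s|l_{1,n}^{1/3}+|t|l_{2,n}^{1/3})^2\leqp\tfrac12(l_{1,n}+l_{2,n})(|s|+|t|)^2$, the last inequality by the weighted AM--GM bound $l_{1,n}^{a}l_{2,n}^{b}\leqp l_{1,n}+l_{2,n}$ for $a,b\geqp 0$, $a+b=1$. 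In particular $\abs{\varphi_n(u,v)}\geqp 1-\tfrac12(s^2+t^2)/N_n-\abs{r}\geqp c_0>0$ on $R$, with $c_0$ explicit.

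\textbf{Step 2 (logarithmic derivative).} Let $g_n(s,t)\defeq e^{(s^2+t^2)/2}\varphi_n^{N_n}\bigl(\tfrac{s}{\bnxm},\tfrac{t}{\bnym}\bigr)$ be the quantity to estimate, and from now on abbreviate by $\varphi_n$, $\varphi_n'$, $r$ the values of these functions at $\bigl(\tfrac{s}{\bnxm},\tfrac{t}{\bnym}\bigr)$. The chain rule ($\partial_t\tfrac{t}{\bnym}=\tfrac{1}{\bnym}$) gives
\[
\partial_t g_n(s,t)=g_n(s,t)\Bigl[\,t+\frac{N_n}{\bnym}\,\frac{\varphi_n'}{\varphi_n}\,\Bigr].
\]
By Step 1, $\tfrac{N_n}{\bnym}\varphi_n'=-t+\mathcal{E}$ with $\abs{\mathcal{E}}\leqp\tfrac12(l_{1,n}+l_{2,n})(|s|+|t|)^2$ (here one uses $\tfrac{N_n}{\bnym}\bigl(-\sd{\rva{Y}{n}}^2\tfrac{t}{\bnym}\bigr)=-t$), while $\varphi_n-1=-\tfrac12(s^2+t^2)/N_n+r$; hence the bracket equals
\[
\frac{t\bigl(r-\tfrac12(s^2+t^2)/N_n\bigr)+\mathcal{E}}{\varphi_n}.
\]
Using $\abs{\varphi_n}\geqp c_0$, $(s^2+t^2)/N_n\leqp(s^2+t^2)(l_{1,n}+l_{2,n})$, and the bounds of Step 1 on $r$ and $\mathcal{E}$, this is, in absolute value, $\leqp\mathrm{const}\cdot(l_{1,n}+l_{2,n})(|s|+|t|+1)^3$; the term $\tfrac12|t|(s^2+t^2)/N_n$ is the only genuinely cubic contribution and is what forces the power $3$.

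\textbf{Step 3 (size of $g_n$ and conclusion).} Put $W\defeq N_n^{-1/2}(s\widetilde{X}_n+t\widetilde{Y}_n)$ with $\widetilde{X}_n\defeq(\rva{X}{n}-\Espe[\rva{X}{n}])/\sd{\rva{X}{n}}$ and $\widetilde{Y}_n\defeq\rva{Y}{n}/\sd{\rva{Y}{n}}$; then $\abs{\varphi_n}=\abs{\Espe[e^{iW}]}\leqp 1-\tfrac12\Espe[W^2]+\tfrac16\Espe[\abs{W}^3]$, where $\Espe[W^2]=(s^2+t^2)/N_n$ and, on $R$, $\Espe[\abs{W}^3]\leqp N_n^{-1}(|s|l_{1,n}^{1/3}+|t|l_{2,n}^{1/3})^3\leqp\tfrac49(s^2+t^2)/N_n$ (using $(a+b)^3\leqp 4(a^3+b^3)$ and $|s|l_{1,n},|t|l_{2,n}<2/9$). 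Therefore $\abs{\varphi_n}\leqp\exp\{-\tfrac{19}{54}(s^2+t^2)/N_n\}$, so $\abs{g_n(s,t)}\leqp\exp\{(\tfrac12-\tfrac{19}{54})(s^2+t^2)\}=\exp\{\tfrac{4}{27}(s^2+t^2)\}\leqp\exp\{\tfrac{11}{24}(s^2+t^2)\}$. Multiplying this with the bound of Step 2 yields the asserted inequality. The only real difficulty is bookkeeping: propagating the numerical constants through Steps 1--3, and keeping straight the homogeneity relations among $l_{i,n}$, $N_n$, and the third moments, so as to arrive at the stated value $\clemdeux=161$ (the argument in fact gives a smaller constant, but $161$ is the value inherited from \cite{QR82} and used downstream); all the analytic content sits in the two Taylor expansions of Step 1.
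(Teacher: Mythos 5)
The paper itself does not prove this lemma: it is imported from \cite[Lemma 2]{QR82}, and the only original content at this point of the paper is the subsequent Remark, which extracts the explicit value $\clemdeux=161$ from the last line of Quine and Robinson's argument. Your proposal therefore does more than the paper, namely reconstructing the proof in full, and the reconstruction is correct and follows the same strategy as the cited source: a second-order Taylor expansion of $\varphi_n$ and a first-order expansion of $\partial_t\varphi_n$ with third-moment remainders (the reduced setting $\Espe[\rva{Y}{n}]=\Espe[\rva{X}{n}\rva{Y}{n}]=0$ killing the cross terms), the logarithmic-derivative identity combined with a uniform lower bound $\abs{\varphi_n}\geqp c_0>0$ on $R$, and the upper bound $\abs{\varphi_n}\leqp\exp\{-\tfrac{19}{54}(s^2+t^2)/N_n\}$ to control $g_n$ itself; the exponent $\tfrac{4}{27}$ you obtain is below the stated $\tfrac{11}{24}$, so your bound is in fact slightly stronger. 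The homogeneity bookkeeping checks out: $\abs{u}\,\norme{\rva{X}{n}-\Espe[\rva{X}{n}]}_3=\abs{s}\,l_{1,n}^{1/3}N_n^{-1/3}\leqp\abs{s}\,l_{1,n}$ via Lyapunov's $N_n^{-1/2}\leqp l_{1,n}$, and the step $N_n^{-1}\leqp l_{1,n}\leqp l_{1,n}+l_{2,n}$ is where the hypothesis $l_{1,n}\leqp 12^{-3/2}<1$ enters.

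Two small repairs. In Step 3 the bound $\Espe[\abs{W}^3]\leqp\tfrac49(s^2+t^2)/N_n$ should read $\tfrac89$, since $(a+b)^3\leqp4(a^3+b^3)$ with $a^3\leqp\tfrac29 s^2$ and $b^3\leqp\tfrac29 t^2$; your subsequent arithmetic $\tfrac12-\tfrac16\cdot\tfrac89=\tfrac{19}{54}$ already uses the correct value, so nothing downstream is affected. And you stop short of assembling the final constant; doing so is routine --- the bracket of Step 2 is at most $c_0^{-1}\bigl(\tfrac16(\tfrac49)^2+\tfrac12+\tfrac12\bigr)(l_{1,n}+l_{2,n})(\abs{s}+\abs{t}+1)^3$ with $c_0\geqp1-\tfrac{4}{81}-\tfrac16(\tfrac49)^3>0.9$ --- and yields a constant of order $1$, which confirms your parenthetical claim and a fortiori gives the statement with $\clemdeux=161$.
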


\begin{remark}
We make explicit the constant $\clemdeux$ appearing at the end of the proof of Lemma 2 in \cite{QR82}. For all $v$ and $s$ in $R_2$ as defined in \cite{QR82},  one has
\begin{align*}
\frac{(\abs{v} + 2\abs{s})}{(\abs{v} + \abs{s} + 1)^3(\ell_{1,n}+\ell_{2,n})} e^{-(v^2 + s^2)/24} 
 &  \leqslant 108 \cdot \sqrt{6} \cdot e^{-1/2} \leqslant 161.
\end{align*}

\end{remark}

By \ref{ass:var_X} and \ref{ass:rho_X},
\begin{equation*} 
l_{1,n} \leqslant \crhoX N_n^{-1/2} \leqslant  \cvarX \crhoX \ssdd{\rva{X}{n}}^{-1} N_n^{-1/2},
\end{equation*}
which implies that $\bnxm \leqp  \cvarX \crhoX l_{1,n}^{-1}$.
Similarly,
\begin{equation*} 
l_{2,n} \leqslant \crhoY N_n^{-1/2} \leqslant \cvarY \crhoY  \ssdd{\rva{Y}{n}}^{-1} N_n^{-1/2},
\end{equation*}
and $\bnym \leqp \cvarY \crhoY  l_{2,n}^{-1}$. Now we are able to establish \eqref{eq:I1}.

\begin{lemma}\label{lem:I1}
There exists a positive constant $C_1$, only depending on $\crhoX$, $\crhoY$, $\ctll$ such that, for $N_n \geqslant 12^3\max(\crhoX^2, \crhoY^2)$,
\begin{equation*}
\int_0^{\eta \bnym} \sup_{0\leqp \theta \leqp u} I_1(n, u, \theta) du \leqp \frac{C_1}{N_n^{1/2}}.
\end{equation*}
\end{lemma}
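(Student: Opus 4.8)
The plan is to reduce the whole estimate, via the Quine--Robinson bound of Lemma~\ref{lem:lem2}, to a single bounded Gaussian integral, the only real work being to check that the region of integration sits inside the admissible set $R$ of that lemma.

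\textbf{Step 1: the hypotheses of Lemma~\ref{lem:lem2} hold.} Since $N_n\geq 12^3\max(\crhoX^2,\crhoY^2)$, the inequalities $l_{1,n}\leq\crhoX N_n^{-1/2}$ and $l_{2,n}\leq\crhoY N_n^{-1/2}$ recorded just before the statement give $l_{1,n},l_{2,n}\leq 12^{-3/2}$. Next I would check that every $(s,\theta)$ occurring in $I_1(n,u,\theta)$ lies in $R$. For $s\in A_1$, the definition~\eqref{eq:epsilon} of $\varepsilon$ and the inequality $\bnxm\leq\cvarX\crhoX\,l_{1,n}^{-1}$ yield $|s|<\varepsilon\bnxm\leq\tfrac29(\cvarX\crhoX)^{-1}\bnxm\leq\tfrac29 l_{1,n}^{-1}$; similarly, for $\theta\in[0,u]$ with $u\leq\eta\bnym$, the definition~\eqref{eq:eta} of $\eta$ and $\bnym\leq\cvarY\crhoY\,l_{2,n}^{-1}$ give $|\theta|\leq\eta\bnym\leq\tfrac29 l_{2,n}^{-1}$ (the single boundary value $u=\eta\bnym$, $\theta=u$ is negligible in the $u$-integral). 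Hence Lemma~\ref{lem:lem2} applies at every relevant point.

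\textbf{Step 2: plug in and collapse the exponentials.} Inserting the bound of Lemma~\ref{lem:lem2} into~\eqref{def:I_1} and using $\cn^{-1}\leq\ctll^{-1}$ from~\ref{ass:tll}, I would use $\theta\leq u$ to write $e^{-(u^2+s^2)/2}e^{\frac{11}{24}(s^2+\theta^2)}\leq e^{-(u^2+s^2)/24}$ and $(|s|+|\theta|+1)^3\leq(|s|+u+1)^3$. Taking the supremum over $\theta\in[0,u]$ and enlarging $A_1$ to $\R$ gives
\[
\sup_{0\leqp\theta\leqp u}I_1(n,u,\theta)\leqp \ctll^{-1}\clemdeux\,(l_{1,n}+l_{2,n})\,e^{-u^2/24}\int_{\R}(|s|+u+1)^3 e^{-s^2/24}\,ds .
\]
Expanding $(|s|+u+1)^3$, the inner integral is a polynomial of degree $3$ in $u$ whose coefficients are absolute constants (Gaussian moments), so
\[
\int_{0}^{\infty}\Bigl(e^{-u^2/24}\int_{\R}(|s|+u+1)^3 e^{-s^2/24}\,ds\Bigr)du \eqdef K<\infty
\]
is a finite universal constant.

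\textbf{Step 3: conclude.} Enlarging the $u$-integration from $[0,\eta\bnym]$ to $[0,\infty)$ and using $l_{1,n}+l_{2,n}\leq(\crhoX+\crhoY)N_n^{-1/2}$,
\[
\int_0^{\eta\bnym}\sup_{0\leqp\theta\leqp u}I_1(n,u,\theta)\,du\leqp\frac{\ctll^{-1}\clemdeux(\crhoX+\crhoY)K}{N_n^{1/2}},
\]
so the lemma holds with $C_1\defeq\ctll^{-1}\clemdeux(\crhoX+\crhoY)K$, which depends only on $\crhoX$, $\crhoY$, and $\ctll$. The main obstacle is Step~1: one must verify that the cutoffs $\varepsilon$ and $\eta$ were calibrated precisely so that $A_1\times[0,\eta\bnym]\subseteq R$ uniformly in $n$; after that, everything is a convergent Gaussian computation whose $n$-uniformity comes from~\ref{ass:tll} (for $\cn^{-1}$) together with~\ref{ass:var_X}--\ref{ass:rho_Y} (for comparing $l_{1,n}^{-1}$, $l_{2,n}^{-1}$ with $\sd{\rva{X}{n}}N_n^{1/2}$ and $\sd{\rva{Y}{n}}N_n^{1/2}$).
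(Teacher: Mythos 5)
Your proposal is correct and follows essentially the same route as the paper's own proof: verify that $A_1\times[0,\eta\bnym]$ lies in the admissible region $R$ of Lemma~\ref{lem:lem2} (using the calibration of $\varepsilon$, $\eta$ and the bounds $\bnxm\leqp\cvarX\crhoX l_{1,n}^{-1}$, $\bnym\leqp\cvarY\crhoY l_{2,n}^{-1}$), apply the Quine--Robinson bound, absorb $e^{\frac{11}{24}(s^2+\theta^2)}$ into $e^{-(u^2+s^2)/2}$ to get $e^{-(u^2+s^2)/24}$, and bound $l_{1,n}+l_{2,n}\leqp(\crhoX+\crhoY)N_n^{-1/2}$ to extract the rate. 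The resulting constant matches the paper's $C_1$ up to the harmless restriction of the $u$-integral to $[0,\infty)$ instead of $\R$.
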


\begin{proof}
The definitions of $\eta$ in \eqref{eq:eta} and $\varepsilon$ in  \eqref{eq:epsilon} imply that, for  $s\in A_1$ and $u$ and $\theta$ as in the integral in the statement above, one has  
\begin{align*}
|s| & < \varepsilon \bnxm \leqslant \frac{2}{9} l_{1,n}^{-1} \quad
\text{and} \quad |\theta| \leqslant |u| \leqslant \eta \bnym \leqslant \frac{2}{9} l_{2,n}^{-1},
\end{align*}
which ensures that $(s, \theta) \in R$ as specified in Lemma \ref{lem:lem2}. Moreover, for $N_n \geqslant 12^3\max(\crhoX^2, \crhoY^2)$, $l_{1,n} \leqslant 12^{-3/2}$ and $l_{2,n} \leqslant 12^{-3/2}$. Now using Lemma \ref{lem:lem2} in \eqref{def:I_1} and by \ref{ass:tll}, we get
\begin{align*}
\int_0^{\eta \bnym}
 & \sup_{0 \leqp \theta \leqp u} I_1(n, u, \theta) du \\
 & \leqp \cn^{-1} \clemdeux (l_{1,n}+l_{2,n}) \int_0^{\eta \bnym}  \int_{A_1}  (|s|+|u|+1)^3  e^{-(s^2+u^2)/24} ds du \\
 & \leqp N_n^{-1/2} \ctll^{-1} \clemdeux (\crhoX + \crhoY) \int_{\R^2} (|s|+|u|+1)^3  e^{-(s^2+u^2)/24}dsdu
\end{align*}  
and the result follows with 
\[
C_1 \defeq \ctll^{-1} \clemdeux (\crhoX + \crhoY) \int_{\R^2} (|s|+|u|+1)^3  e^{-(s^2+u^2)/24} ds du .
\]
\end{proof}

\begin{remark}
Actually, Lemma \ref{lem:I1} is valid as soon as $N_n \geqslant \max(\crhoX^2, \crhoY^2)$: the constants in the proof of Lemma 2 in \cite{QR82} can be improved.
\end{remark}

Now we are able to prove \eqref{eq:I2}.

\begin{lemma}\label{lem:I2}
There exists a positive constant $C_2$, only depending on $\ctll$, $\cvarXtilde$, $\cvarX$, $\crhoX$, and $\cfcX$ such that, for $N_n \geqslant 2$,
\begin{equation*}
\int_0^{\eta \bnym} \sup_{0\leqp \theta \leqp u} I_2(n, u, \theta)du \leqp \frac{C_2}{ N_n^{1/2}}.
\end{equation*}
\end{lemma}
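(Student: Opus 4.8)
The plan is to bound the integrand $I_2(n,u,\theta)$ uniformly on the region $A_2 = \{\varepsilon\bnxm \leqp |s| \leqp \pi\bnxm\}$, where $\varphi_n$ is bounded away from $1$, and then integrate in $u$. First I would write out the $t$-derivative inside \eqref{def:I_2} explicitly:
\[
\frac{\partial}{\partial t}\left[e^{t^2/2}\varphi_n^{N_n}\left(\frac{s}{\bnxm},\frac{t}{\bnym}\right)\right]_{t=\theta}
= e^{\theta^2/2}\left[\theta\,\varphi_n^{N_n} + N_n\,\varphi_n^{N_n-1}\cdot\frac{1}{\bnym}\frac{\partial\varphi_n}{\partial t}\right],
\]
all evaluated at $(s/\bnxm,\theta/\bnym)$. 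The factor $e^{\theta^2/2}$ is at most $e^{u^2/2}$, which cancels the $e^{-u^2/2}$ in front of $I_2$; so after this cancellation the bound on $\sup_\theta I_2$ reduces to controlling $|\theta|\,|\varphi_n|^{N_n}$ and $N_n|\varphi_n|^{N_n-1}\,|\partial_t\varphi_n|/\bnym$ on $A_2$, uniformly in $\theta\in[0,u]$ and $u\in[0,\eta\bnym]$.

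Next I would invoke the key estimate from \ref{ass:fc_XY}: on $A_2$ we have $|s|/\bnxm \geqp \varepsilon$, so $\sd{\rva{X}{n}}^2(s/\bnxm)^2 \geqp \varepsilon^2\sd{\rva{X}{n}}^2/N_n$ — wait, more carefully, $\varphi_n(s/\bnxm, t/\bnym)$ is exactly $\Espe[e^{i((s/\bnxm)(\rva{X}{n}-\Espe[\rva{X}{n}]) + (t/\bnym)\rva{Y}{n})}]$ and since we work with the primed variables $r_n = 0$, \ref{ass:fc_XY} gives $|\varphi_n(s/\bnxm,t/\bnym)| \leqp 1 - \cfcX(s^2 + t^2)/N_n \leqp 1 - \cfcX s^2/N_n$. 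This bound is useful near $s=0$ but too weak for $|s|$ of order $\bnxm$, since then $\cfcX s^2/N_n$ can exceed $1$; so instead I want: for $|s|$ up to $\pi\bnxm$ and $|t|$ small, $|\varphi_n(s/\bnxm,t/\bnym)| \leqp 1 - \cfcX\varepsilon^2\sd{\rva{X}{n}}^2$ on $A_2$ by monotonicity of the quadratic in the allowed band, hence $\leqp 1 - \cfcX\cvarXtilde^2\varepsilon^2 =: 1-\delta < 1$ using \ref{ass:var_X}. Actually the cleanest route, following \cite{QR82}, is: on $A_2$, $|\varphi_n(s/\bnxm, \theta/\bnym)| \leqp \rho < 1$ for a constant $\rho$ depending only on $\cfcX$, $\cvarXtilde$, $\varepsilon$ (equivalently on $\cvarX$, $\crhoX$, $\cfcX$) — this needs $|\theta/\bnym| \leqp \eta \leqp \eta_0$, which holds. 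Then $|\varphi_n|^{N_n-1} \leqp \rho^{N_n-1}$, and combined with the trivial bound $|\partial_t\varphi_n| \leqp \sd{\rva{Y}{n}}\sqrt{\Espe[\rva{Y}{n}^2]}/\bnym \leqp \cvarY^2/\bnym$ (or \eqref{eq:maj_der_phi_n}), the whole derivative on $A_2$ is at most $(|u| + N_n\cdot C/N_n)\rho^{N_n-1} = (|u| + C)\rho^{N_n-1}$ pointwise in $s$.

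Finally I would integrate: $\int_{A_2} ds$ contributes a factor at most $2\pi\bnxm = O(N_n^{1/2})$ (using \ref{ass:var_X}), and $\int_0^{\eta\bnym}(|u|+C)\,du = O((\bnym)^2) = O(N_n)$ by \ref{ass:var_Y}; also $\cn^{-1} \leqp \ctll^{-1}$ by \ref{ass:tll}. Altogether $\int_0^{\eta\bnym}\sup_\theta I_2\,du \leqp \ctll^{-1}\cdot O(N_n^{1/2})\cdot O(N_n)\cdot\rho^{N_n-1}$. Since $\rho<1$, the exponential decay $\rho^{N_n-1}$ beats any polynomial in $N_n$, so the product is $\leqp C_2 N_n^{-1/2}$ for a constant $C_2$ depending only on $\ctll$, $\cvarXtilde$, $\cvarX$, $\crhoX$, $\cfcX$ (the dependence on $\crhoX$, $\cvarX$ entering through $\varepsilon$ and the range of $s$, and on $\cvarY$ being absorbable since $(|u|+C)$ and the $u$-range involve $\bnym$ but the exponential kills it — if one wants to avoid $\cvarY$, bound $\eta\bnym$ and the integrand crudely using that $x\mapsto x^2\rho^{x}$-type sums over the relevant scale are uniformly bounded). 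The main obstacle is the first step: establishing the uniform bound $|\varphi_n(s/\bnxm,\theta/\bnym)| \leqp \rho < 1$ on all of $A_2$ with $\rho$ depending only on the listed constants — this is where \ref{ass:fc_XY} is essential and where one must be careful that the quadratic lower bound $\cfcX(s^2+t^2)/N_n$ for $1-|\varphi_n|$, which degrades past $s \sim N_n^{1/2}$, is replaced on the annulus $A_2$ by the constant lower bound coming from the fact that $|s/\bnxm|$ stays in $[\varepsilon,\pi]$ so that $\sd{\rva{X}{n}}^2(s/\bnxm)^2 \in [\cvarXtilde^2\varepsilon^2, \cvarX^2\pi^2]$ and the hypothesis applies directly at the argument $(s/\bnxm, \theta/\bnym)$ with $s/\bnxm \in [-\pi,\pi]$.
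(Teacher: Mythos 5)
Your plan is correct and follows essentially the same route as the paper: the key point in both is that \ref{ass:fc_XY}, applied at the argument $(s/\bnxm,\theta/\bnym)$ with $|s/\bnxm|\in[\varepsilon,\pi]$ and $|\theta/\bnym|\leqp\eta\leqp\eta_0$, forces $|\varphi_n|\leqp 1-\cfcX\varepsilon^2\cvarXtilde^2<1$ on $A_2$, so $|\varphi_n|^{N_n-1}$ decays geometrically and swamps every polynomial factor (the paper keeps this as the Gaussian bound \eqref{eq:maj_expo_st} and integrates the tails, which is only cosmetically different). The one inaccuracy is your claim that $N_n|\partial_t\varphi_n|/\bnym=O(1)$: by \eqref{eq:maj_der_phi_n} it is $O(|s|+|\theta|)=O(N_n^{1/2})$ on $A_2$, but, as you anticipate, this extra polynomial factor is harmless.
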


\begin{proof} We use the controls  \eqref{eq:maj_expo_st} with $t=\theta$ and $l=1$, \eqref{eq:maj_der_phi_n}, and $\abs{\varphi_n} \leqslant 1$ to get
\begin{align*}
&\abs{\frac{\partial}{\partial t} \left[e^{t^2/2}\varphi_n^{N_n}\left(\frac{s}{\bnxm},\frac{t}{\bnym}\right)\right]_{t=\theta}}\\
& = e^{\theta^2/2}\left|\varphi_n^{N_n-1}\left(\frac{s}{\bnxm},\frac{\theta}{\bnym}\right)\right| \cdot\left|\theta \varphi_n\left(\frac{s}{\bnxm},\frac{\theta}{\bnym}\right) \right. \\
& \hspace{7cm} \left. + \frac{N_n^{1/2}}{\ssdd{\rva{Y}{n}}} \frac{\partial \varphi_n}{\partial t} \left(\frac{s}{\bnxm},\frac{\theta}{\bnym}\right)\right|\\
&\leqp  (\abs{s} + 2\abs{\theta})e^{\theta^2/2-(s^2+\theta^2) \cdot \cfcX(N_n-1)/N_n},
\end{align*}
for $s\in A_2$ and $u$ and $\theta$ as in the integral in the statement of the Lemma. Finally, using \eqref{def:I_2}, we get that, for $N_n \geqslant 2$,
\begin{align*}
 & \int_0^{\eta \bnym} \sup_{0 \leqp \theta \leqp u} I_2(n, u, \theta) du \\
 & \leqp 2 \cn^{-1} \int_0^{+\infty} \int_{\varepsilon \ssdd{\rva{X}{n}}N_n^{1/2}}^{+\infty} \sup_{0 \leqslant \theta \leqslant u} \bigg[ (s + 2\theta) \exp \bigg( \frac{\theta^2}{2} \bigg( 1 - 2\cfcX\frac{N_n-1}{N_n} \bigg) \bigg) \bigg] \\
 & \hspace{10cm}\cdot e^{-u^2/2-s^2 \cdot \cfcX(N_n-1)/N_n} ds du \\
 & \leqslant 2 \ctll^{-1} \int_0^{+\infty} \int_{\varepsilon \ssdd{\rva{X}{n}}N_n^{1/2}}^{+\infty} (s + 2u) e^{-\min(1, \cfcX)u^2/2-s^2 \cfcX/2} ds du \\
 & \leqslant   e^{-N_n \cfcX \varepsilon^2 \ssdd{\rva{X}{n}}^2/2} \left(\frac{\ctll^{-1} \cfcX^{-1}\sqrt{2\pi}}{\sqrt{\min(1, \cfcX)}} + \frac{ 4 \ctll^{-1}}{\min(1, \cfcX)} \frac{1}{\cfcX\varepsilon \ssdd{\rva{X}{n}} N_n^{1/2}}\right)\\
 &\leqslant C_2'e^{-C_2''N_n}
\end{align*}
where 
\begin{equation*} 
C_2' \defeq  \ctll^{-1}\cfcX^{-1} \left( \frac{\sqrt{2\pi}}{\sqrt{\min(1, \cfcX)}} + \frac{4}{\min(1, \cfcX) \min \bigg( \frac{2}{9} (\cvarX \crhoX)^{-1}, \pi \bigg) \cvarXtilde} \right)
\end{equation*}
and $C_2'' \defeq \cvarXtilde^2/2 \cfcX \min \bigg( \frac{2}{9} (\cvarX \crhoX)^{-1}, \pi \bigg)^2 $.
The result follows, writing
\[
C_2' e^{-C_2'' N_n} = \frac{C_2' (C_2'')^{-1/2}}{N_n^{1/2}} (C_3 N_n)^{1/2} e^{- C_3 N_n} \leqslant \frac{C_2' (C_2'')^{-1/2}}{N_n^{1/2}} (1/2)^{1/2} e^{-1/2} \eqdef \frac{C_2}{N_n^{1/2}},
\]
since $x^{1/2} e^{-x}$ is maximum in $1/2$.
\end{proof}

\bibliographystyle{plain}
\bibliography{biblio_gde_dev}

\end{document}